\documentclass[10pt]{article}

\usepackage{graphicx}
\usepackage{subfigure}
\usepackage{algorithm}
\usepackage{algpseudocode}
\usepackage{amsmath}
\usepackage{amsthm}
\usepackage{amssymb}
\usepackage{caption}
\usepackage[]{todonotes} 
\usepackage{latexsym}
\usepackage{graphicx}
\usepackage{fullpage}
\usepackage{dsfont}
\usepackage{tikz}
\usetikzlibrary{calc, decorations.markings, intersections}
\usepackage{color}
\usepackage{chngcntr}
\usepackage{paralist}

\usetikzlibrary{decorations,arrows}
\usetikzlibrary{decorations.pathreplacing}
\usetikzlibrary{snakes}
\usetikzlibrary{arrows}
\usetikzlibrary{shapes}
\usetikzlibrary{backgrounds}

\newtheorem{theorem}{Theorem}[section]
\newtheorem{proposition}[theorem]{Proposition}
\newtheorem{lemma}[theorem]{Lemma}

\newcommand{\R}{\mathbb{R}}

\newcommand\1{{\mathbf 1}}

\newcommand{\binvar}[1]{\{ 0,1 \}^{#1}}
\newcommand{\conv}[1]{\convexh\left( #1\right)}

\DeclareMathOperator*{\convexh}{conv}
\DeclareMathOperator*{\argmax}{argmax}

\begin{document}
\title{Oracle-Based Algorithms for Binary Two-Stage Robust Optimization\thanks{This work has been supported by the German Research Foundation (DFG) under grant no. BU~2313/2 -- CL~318/14.}}

\author{Nicolas K\"ammerling \thanks{Institute of Transport Logistics, TU Dortmund University, Leonhard-Euler-Stra{\ss}e 2, 44227 Dortmund, Germany} \and Jannis Kurtz\thanks{Chair for Mathematics of Information Processing, RWTH Aachen University, Pontdriesch 12-14, 52062 Aachen, Germany}}

\date{} 

\providecommand{\keywords}[1]{\textit{#1}}


\maketitle

\begin{abstract}
In this work we study binary two-stage robust optimization problems with objective uncertainty. 
We present an algorithm to calculate efficiently lower bounds for the binary two-stage robust problem by solving alternately the underlying deterministic problem and an adversarial problem. For the deterministic problem any oracle can be used which returns an optimal solution for every possible scenario. We show that the latter lower bound can be implemented in a branch \& bound procedure, where the branching is performed only over the first-stage decision variables. All results even hold for non-linear objective functions which are concave in the uncertain parameters. As an alternative solution method we apply a column-and-constraint generation algorithm to the binary two-stage robust problem with objective uncertainty.

We test both algorithms on benchmark instances of the uncapacitated single-allocation hub-location problem and of the capital budgeting problem. Our results show that the branch \& bound procedure outperforms the column-and-constraint generation algorithm.
\end{abstract}
\keywords{Two-Stage Robust Optimization, Non-linear Binary Optimization, Branch \& Bound Algorithm}

\section{Introduction.}\label{sec:intro}
The concept of robust optimization was created to tackle optimization problems with uncertain parameters. The basic idea behind this concept is to use uncertainty sets instead of probability distributions to model uncertainty. More precisely it is assumed that all realizations of the uncertain parameters, called \textit{scenarios}, are contained in a known uncertainty set. Instead of optimizing the expected objective value or a given risk-measure as common in the field of stochastic optimization, in the robust optimization framework we calculate solutions which are optimal in the worst case and which are feasible for all scenarios in the uncertainty set. 

The concept was first introduced in \cite{soyster1973}. Later it was studied for combinatorial optimization problems with discrete uncertainty sets in \cite{kouvelis}, for conic and ellipsoidal uncertainty in \cite{roconv,bentalRobust}, for semi-definite and least-square problems in \cite{el1998robust,el1997robust} and for budgeted uncertainty in \cite{bertsimas04priceofrobustness,bertsimas2003combinatorial}. An overview of the robust optimization literature can be found in \cite{buchheim2018robust,bertsimas2011theory,aissi_minmax_survey,ben2009robust}. 

The so called robust counterpart is known to be NP-hard for most of the classical combinatorial problems, although most of them can be solved in polynomial time in its deterministic version; see \cite{kouvelis}. Furthermore it is a well-known drawback of this approach that the optimal solutions are often too conservative for practical issues \cite{bertsimas04priceofrobustness}. To obtain better and less-conservative solutions several new ideas have been developed to improve the concept of robustness; see e.g. \cite{kouvelis,fischetti_lightrobust,schoebel_generalizedrobust,liebchen_recovrobust,adjiashvili2015bulk}. 

Inspired by the concept of two-stage stochastic programming a further extension of the classical robust approach which attained increasing attention in the last decade is the concept of \textit{two-stage robustness}, or sometimes called \textit{adjustable robustness}, first introduced in \cite{ben2004adjustable}. The idea behind this approach is tailored for problems which have two different kinds of decision variables, first-stage decisions which have to be made \textit{here-and-now} and second-stage decisions which can be determined after the uncertain parameters are known, sometimes called \textit{wait-and-see} decisions. As in the classical robust framework it is assumed that all uncertain scenarios are contained in a known uncertainty set and the worst-case objective value is optimized. The main difference to the classical approach is that the second-stage decisions do not have to be made in advance but can be chosen as the best reaction to a scenario after it occured. This approach can be modeled by min-max-min problems in general. Famous applications occur in the field of network design problems where in the first stage a capacity on an edge must be bought such that, after the real costs on each edge are known, a minimum cost flow is sent from a source to a sink which can only use the bought capacities~\cite{bertsimas2013adjustableapprox}. An overview of recent results for two-stage robustness can be found in \cite{yanikouglu2017adjustable}. Several concepts closely related to the two-stage robust concept were introduced in \cite{liebchen_recovrobust,adjiashvili2015bulk,buchheimkurtzconvex}.

In this work we study binary two-stage robust optimization problems. We consider underlying deterministic problems of the form
\begin{equation}\label{eq:deterministicprob}\tag{CP}
\min_{(x,y)\in Z} f(x,y,c)
\end{equation}
where $f:Z\times \R^m \to \R$, the set $Z\subseteq\binvar{n_1+n_2}$ contains all incidence vectors of the feasible solutions and is assumed to be non-empty, $c\in\R^{m}$ is a given parameter vector and $f(x,y,\cdot )$ is concave for each given $(x,y)\in Z$. The variables $x$ are called \textit{first-stage solutions} and the variables $y$ are called \textit{second-stage solutions}. We assume that the vector $c$ is uncertain and all possible realizations $c$ are contained in a convex uncertainty set $U\subset \R^{m}$. The binary two-stage robust problem is then defined by
\begin{equation}\label{eq:two-stageprob}\tag{2RP}
\min_{x\in X}\max_{c\in U}\min_{y\in Y(x)} f(x,y,c)
\end{equation}
where $X\subset \binvar{n_1}$ is the projection of $Z$ onto the $x$-variables, i.e. \[X:=\left\{ x\in \binvar{n_1} \ | \ \exists \ y\in \binvar{n_2} : (x,y)\in Z\right\},\] and $Y(x):=\left\{ y\in \binvar{n_2} \ | \ (x,y)\in Z\right\}$. Note that all results presented in this paper are still valid, if the recourse variables are non-integer. We do not consider uncertainty affecting the constraints of the problem which is a situation often occuring in practice for most of the classical combinatorial optimization problems. Problem \eqref{eq:two-stageprob} can be interpretated as follows: In the first stage, before knowing the precise uncertain vector $c$, the decisions $x\in X$ have to be made. Afterwards, when the cost-vectors are known, we can choose the best feasible second-stage solution $y\in Y(x)$ for the given costs. As usual in robust optimization we measure the worst-case over all possible scenarios in~$U$.
Note that by our definition of the set $Y(x)$ and since the uncertainty only affects the objective function, there always exists a feasible second-stage solution $y\in Y(x)$ for each first-stage solution $x\in X$.

Problem \eqref{eq:two-stageprob} has been already studied in the literature and several exact algorithms as well as approximation algorithms have been proposed; see Section \ref{sec:literature}. While several of the existing methods are able to handle uncertainty in the constraints it is often assumed that a polyhedral description of the sets $X$ and $Y(x)$ is given. Besides the latter limitation most of the methods are based on dualizations or reformulations which destroy the structure of the original problem \eqref{eq:deterministicprob}. Often the uncertainty set is even restricted to be a polyhedron. In this work we derive the first oracle-based exact algorithm which solves Problem \eqref{eq:two-stageprob} for any deterministic problem by solving alternately the deterministic Problem \eqref{eq:deterministicprob} and an adversarial problem presented later. For the deterministic problem any oracle can be used which returns an optimal solution of \eqref{eq:deterministicprob} for every possible scenario in $U$. The advantage of the latter method is that the structure of the underlying problem is preserved and any preliminary algorithms which were derived for the underlying problem can be used. Furthermore our algorithm works for most of the common convex uncertainty sets. Additionally we apply the column-and-constraint generation algorithm (CCG) presented in \cite{zeng2013twostage} to Problem \eqref{eq:two-stageprob} and compare it to our new method.

In Section \ref{sec:literature} we will give an overview of the literature related to two-stage robust optimization problems. In Section \ref{sec:lineartwostage} we derive an oracle-based branch \& bound procedure to solve Problem \eqref{eq:two-stageprob}. Furthermore we apply the results in \cite{zeng2013twostage} to Problem \eqref{eq:two-stageprob}. Finally in Section \ref{sec:hub} we apply both methods to the uncapacitated singe-allocation hub-location problem and the capital budgeting problem and test it on classical benchmark instances from the literature.

Our main contributions:
\begin{itemize}
\item  We adapt the oracle-based algorithm derived in \cite{buchheim2016min} and show that it can be used to calculate a lower bound for Problem \eqref{eq:two-stageprob} which can be implemented in a branch \& bound procedure where the branching is performed over the first-stage solutions. The calculation of the lower bound can be applied to the common convex uncertainty sets and is done by alternately calling an adversarial problem over $U$ and an oracle which returns an optimal solution of Problem \eqref{eq:deterministicprob} for a given scenario $c\in U$. Therefore any solution algorithm of the deterministic problem can be used to calculate this lower bound.
\item We apply the CCG algorithm presented in \cite{zeng2013twostage} to Problem \eqref{eq:two-stageprob} and show that calculating the upper bound can also be done by the same oracle-based algorithm as above.
\item We apply the branch \& bound procedure and the CCG algorithm to the uncapacitated single-allocation hub-location problem and the capital budgeting problem and show that the branch \& bound procedure outperforms the CCG algorithm.
\end{itemize}

\subsection{Related Literature}\label{sec:literature}
Linear two-stage robust optimization or sometimes called adjustable robust optimization was first introduced in~\cite{ben2004adjustable}. The authors show that the problem is NP-hard even if $X$ and $Y$ are given by linear uncertain constraints and all variables are real; see also \cite{minoux2011twostage}. In \cite{ben2004adjustable} the authors propose to approximate the problem by assuming that the optimal values of the wait and see variables $y$ are affine functions of the uncertain parameters. These so called \textit{affine decision rules} were studied in the robust context in several articles for the case of real recourse; see e.g. \cite{atamturk2007two,ben2005retailer,calafiore2008multi,chen2009uncertain,iancu2010adaptive,kuhn2011primal,shapiro2011dynamic,vayanos2012constraint}.
Furthermore in several works special cases are derived for which a decision rule structure is known which is optimal; see \cite{bertsimas2010optimality,iancu2013supermodularity,bertsimas2012power}. Further non-linear decision rules are studied in \cite{yanikouglu2017adjustable}.

Lower bounds for two-stage robust problems can be derived by considering a finite subset of scenarios in~$U$. Then for each selected scenario $c$ a duplication of the second-stage solution $y^c$ is added to the problem, see \cite{hadjiyiannis2011scenario,campi2004decision,ayoub16}. The authors in \cite{bertsimas2016duality} first dualize the inner minimization and maximization problem and then apply the latter finite scenario approach to the dual problem to obtain stronger lower bounds. Note that while the finite scenario approach can also be applied to the case when the second-stage solutions are integers, for the dualization approach the second-stage variables have to be relaxed to real variables. Unfortunately both lower bounds can not be used in a branch \& bound scheme since for a complete fixation of the first-stage variables the bounds are not necessarily exact. 

Exact methods for real recourse are based on the idea of Benders' decomposition, see \cite{thiele2009robust,bertsimas2013adaptive,jiang2012benders,gabrel2014robust} or column-and-constraint generation \cite{zeng2013twostage,bertsimas2018scalable}. Note that for the Benders' decomposition approaches the second-stage solutions have to be real since dualizations of the second-stage problem are used. In contrast to this the CCG algorithm even works for integer recourse, see \cite{zhao2012exact}. We will apply the latter method to our problem in Section \ref{sec:CCG_linear}.

For the case of integer recourse, i.e. the second-stage variables $y$ are modeled as integer variables, decision rules have been applied to Problem \eqref{eq:two-stageprob} in \cite{bertsimas2015design,bertsimas2014binarydecisionrules} to approximate the problem. Another approximation approach is called $k$-adaptability and was introduced in \cite{bertsimas2010finite}. The idea is to calculate $k$ second-stage solutions in the first-stage and allow to choose the best out of these solutions in the second-stage. Clearly since the set of possible second-stage solutions is restricted compared to the original problem, this idea leads to an approximation of the problem. Solution methods and the quality of this approximation were studied in \cite{bertsimas2010optimality,wiesemann_twostage,subramanyam17}. In \cite{wiesemann_twostage} it is shown that the $k$-adaptability problem is exact if $k$ is chosen larger than the dimension of the problem. The authors in \cite{buchheimkurtzconvex,buchheim2018complexity,eufingerrobust} apply the $k$-adaptability concept to one-stage combinatorial problems to calculate a set of solutions which is worst-case optimal if for each scenario the best of these solutions can be chosen. They furthermore show that solving this problem can be done in polynomial time if an oracle for the deterministic problem exists and if the number of calculated solutions is larger or equal to the dimension of the problem. To solve the problem in the latter case they present an oracle-based algorithm which we will use in Section \ref{sec:lineartwostage}. The $k$-adaptability concept was also applied to the case that the uncertain parameters follow a discrete probability distibution \cite{buchheim2019k}. 

Besides the exact algorithm in \cite{zhao2012exact,zeng2013twostage} approximation methods based on uncertainty set splitting were derived in the literature to approximate two-stage robust problems with integer recourse; see \cite{postek16,bertsimas16_2}.

For two-stage robust problems with non-linear robust constraints decision rules have been applied in \cite{takeda2008adjustable,nagy2003robust}. The two-stage problem is studied for second order conic optimization problems in \cite{boni2008adjustable}. In \cite{ben2015deriving,marandi2017extending} the authors derive robust counterparts of uncertain non-linear constraints. Note that all the latter results were developed for real second-stage solutions.

While this work was under peer review a similar approach to solve two-stage robust optimization problems with uncertainty only affecting the objective function was published; see \cite{arslan2019decomposition}. The authors study Problem \eqref{eq:two-stageprob} with linear objective functions and mixed-integer recourse variables, while the set $Y(x)$ is modeled by linear constraints. They study a relaxation of the lower bound presented in Section \ref{sec:lineartwostage} which is implemented in a branch \& bound procedure. In contrast to the algorithm described in this work, the method in \cite{arslan2019decomposition} is not based on the use of oracles for the deterministic problem. Therefore it can not make use of fast solution methods for \eqref{eq:deterministicprob} as combinatorial algorithms or compact formulations with uncertain parameters appearing in the constraints; see Section \ref{sec:hub}.
 
\section{Binary Two-Stage Robustness}\label{sec:lineartwostage}
In this section we analyze the binary two-stage robust problem \eqref{eq:two-stageprob} with convex uncertainty sets $U$ and derive general lower bounds which can be calculated by an oracle-based algorithm and which can be implemented in a branch \& bound procedure. The branching will be done over the first-stage solutions.

The classical approach to derive lower bounds in a branch \& bound procedure is relaxing the integrality and solving the relaxed problem. Applying this approach to the second-stage decisions of problem \eqref{eq:two-stageprob} is not useful, since for a given $x\in X$ and $c\in U$ an optimal solution of the relaxed second-stage problem may not be contained in $\conv{Y(x)}$, e.g. if the relaxation of $Y(x)$ is a polytope which is not integral. It may be even the case that a linear description of $\conv{Y(x)}$ is not known. Therefore, even if all first-stage variables are fixed, the lower bound obtained by relaxing the second-stage solution variables would not necessarily be exact and an optimal solution can not be guaranteed using a branch \& bound scheme. In the following lemma we derive a lower bound for Problem \eqref{eq:two-stageprob} which is exact if all first-stage solutions are fixed.
\begin{lemma}\label{lem:lowerboundlinear}
Given $U\subset \R^{m}$, then 
\begin{equation}\label{eq:convlowerbound}\tag{LB}
 \max_{c\in U} \min_{(x,y)\in\conv{Z}} \ f(x,y,c)
\end{equation}
is a lower bound for Problem \eqref{eq:two-stageprob}.
\end{lemma}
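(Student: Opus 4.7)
The plan is to establish the inequality by a short chain of two standard steps: a max-min (weak duality) inequality, followed by a relaxation from $Z$ to $\conv{Z}$.

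First, I would rewrite the two-stage objective so that both stages of minimization appear jointly. Define $g(x,c) := \min_{y \in Y(x)} f(x,y,c)$. Then the two-stage problem \eqref{eq:two-stageprob} reads $\min_{x \in X} \max_{c \in U} g(x,c)$. Since $Y(x) = \{y : (x,y)\in Z\}$ and $X$ is the projection of $Z$ onto the first-stage variables, we have the identity
\[
\min_{x \in X} g(x,c) \;=\; \min_{x \in X} \min_{y \in Y(x)} f(x,y,c) \;=\; \min_{(x,y) \in Z} f(x,y,c)
\]
for every fixed $c \in U$.

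The next step is the weak max-min inequality applied to the function $(x,c) \mapsto g(x,c)$: for any sets we always have
\[
\max_{c \in U} \min_{x \in X} g(x,c) \;\leq\; \min_{x \in X} \max_{c \in U} g(x,c).
\]
The right-hand side equals the two-stage value, while the left-hand side equals $\max_{c \in U} \min_{(x,y) \in Z} f(x,y,c)$ by the identity above. This already gives a lower bound for \eqref{eq:two-stageprob} in terms of a single-stage min-max over $Z$.

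Finally, to pass from $Z$ to $\conv{Z}$, I would use the trivial inclusion $Z \subseteq \conv{Z}$, which gives $\min_{(x,y) \in \conv{Z}} f(x,y,c) \leq \min_{(x,y) \in Z} f(x,y,c)$ for every $c \in U$ (here $f(\cdot,\cdot,c)$ is implicitly extended to $\conv{Z}$, as the statement requires). Taking $\max_{c \in U}$ on both sides preserves the inequality, and chaining with the previous step yields
\[
\max_{c \in U} \min_{(x,y) \in \conv{Z}} f(x,y,c) \;\leq\; \max_{c \in U} \min_{(x,y) \in Z} f(x,y,c) \;\leq\; \min_{x \in X} \max_{c \in U} \min_{y \in Y(x)} f(x,y,c),
\]
which is the claim. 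There is no real obstacle here: everything rests on weak duality and monotonicity of $\min$ under set inclusion, and the concavity of $f$ in $c$ (assumed throughout the paper) is not needed for the bound itself, only later for the algorithmic tractability of the inner problem.
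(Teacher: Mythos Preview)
Your proof is correct and follows essentially the same approach as the paper: both arguments first swap the outer $\min$ and inner $\max$ via the weak max--min inequality, then merge the two minimizations into a single $\min$ over $Z$, and finally relax $Z$ to $\conv{Z}$ using the trivial inclusion. Your observation that concavity in $c$ is not needed for the bound (only for tractability) is accurate and worth noting.
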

\begin{proof}
By changing the order of the outer minimum and the inner maximum in Problem \eqref{eq:two-stageprob} we obtain the inequality
\[
\min_{x\in X}\max_{c\in U}\min_{y\in Y(x)} f(x,y,c)\ge \max_{c\in U}\min_{x\in X}\min_{y\in Y(x)} f(x,y,c).
\]
Merging the two minimum expressions and using $Z\subseteq \conv{Z}$ yields 
\[
\max_{c\in U}\min_{x\in X}\min_{y\in Y(x)} f(x,y,c)\ge \max_{c\in U} \min_{(x,y)\in\conv{Z}} \ f(x,y,c),
\]
which proves the result.
\end{proof}
Note that, since $f$ is concave in $c$ and since the pointwise minimum of concave functions is always concave, we have to maximize a concave objective function in Problem \eqref{eq:convlowerbound}. In \cite{buchheimkurtzconvex} the authors analyze Problem \eqref{eq:convlowerbound} for the case that $f$ is a linear function in $(x,y)$ and $c$. They prove that it can be solved in oracle-polynomial time, i.e. by a polynomial time algorithm if solving the deterministic problem \eqref{eq:deterministicprob} is done by an oracle in constant time. Furthermore if we fix a solution $x\in X$, then the bound \eqref{eq:convlowerbound} is exact, which we prove in the following.
\begin{proposition}\label{prop:exactIfFixed}
If all first-stage variables are fixed then \eqref{eq:convlowerbound} is equal to the exact objective value of the fixed first-stage solution.
\end{proposition}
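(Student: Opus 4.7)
The plan is to fix a first-stage solution $\bar x \in X$, substitute it into both the lower bound \eqref{eq:convlowerbound} and the exact value $\max_{c \in U}\min_{y \in Y(\bar x)} f(\bar x,y,c)$, and show that the two expressions coincide. The technical heart of the argument is the identity
\[
\{y : (\bar x,y) \in \conv{Z}\} \;=\; \conv{Y(\bar x)}.
\]
The inclusion $\supseteq$ is immediate: any $y = \sum_i \lambda_i y_i$ with $y_i \in Y(\bar x)$ lifts to the convex combination $(\bar x,y) = \sum_i \lambda_i (\bar x,y_i)$ of points in $Z$. For $\subseteq$, I would write $(\bar x,y) = \sum_i \lambda_i (x_i,y_i)$ with $(x_i,y_i) \in Z$, $\lambda_i > 0$, $\sum_i \lambda_i = 1$. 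Because $\bar x \in \{0,1\}^{n_1}$ is a vertex of the unit cube and each $x_i$ lies in $\{0,1\}^{n_1}$, a coordinate $j$ with $\bar x_j = 0$ forces $(x_i)_j = 0$ for all $i$ with $\lambda_i > 0$, and similarly for $\bar x_j = 1$. Hence $x_i = \bar x$ for every such $i$, so $y_i \in Y(\bar x)$ and $y \in \conv{Y(\bar x)}$. Geometrically this simply says that the face of $\conv{Z}$ in the hyperplane $\{x = \bar x\}$ equals $\{\bar x\} \times \conv{Y(\bar x)}$.

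With this identity, \eqref{eq:convlowerbound} restricted to $x = \bar x$ becomes $\max_{c \in U}\min_{y \in \conv{Y(\bar x)}} f(\bar x,y,c)$. Now for each fixed $c \in U$ the function $y \mapsto f(\bar x,y,c)$ is (at worst) linear in $y$, hence concave, so its minimum over the polytope $\conv{Y(\bar x)}$ is attained at an extreme point, which necessarily belongs to the finite set $Y(\bar x)$. This yields
\[
\min_{y \in \conv{Y(\bar x)}} f(\bar x,y,c) \;=\; \min_{y \in Y(\bar x)} f(\bar x,y,c)
\]
for every $c \in U$; taking the outer $\max_{c \in U}$ on both sides produces the desired equality with the exact objective value of $\bar x$.

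The only genuine obstacle is the set identity above, which is where the binarity of $\bar x$ is used essentially: for a fractional $\bar x$ one would in general only obtain $\{y : (\bar x,y) \in \conv{Z}\} \supsetneq \conv{Y(\bar x)}$ and the bound would not be tight, which is exactly why \eqref{eq:convlowerbound} is a proper relaxation before all first-stage variables have been branched on. Everything after the identity reduces to the routine fact that a concave function on a polytope attains its minimum at a vertex.
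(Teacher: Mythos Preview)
Your argument follows the same route as the paper's: identify the slice $\{(x,y)\in\conv{Z}:x=\bar x\}$ with $\{\bar x\}\times\conv{Y(\bar x)}$, then replace $\conv{Y(\bar x)}$ by $Y(\bar x)$ in the inner minimum. You actually supply a proof of the slice identity (using that $\bar x$ is a vertex of the hypercube), which the paper merely asserts, and you give a reason for the second step, which the paper dismisses with ``Clearly''.

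One caveat worth flagging: your justification for passing from $\conv{Y(\bar x)}$ to $Y(\bar x)$ relies on $y\mapsto f(\bar x,y,c)$ being linear (or at least concave), so that the minimum over the polytope is attained at a vertex. The paper's standing hypotheses, however, only impose concavity in $c$, not in $y$; for objectives that are nonlinear in $y$---such as the quadratic SAHLP objective treated in Section~\ref{sec:hub}---the vertex-minimum argument need not hold in general. Neither your proof nor the paper's ``Clearly'' covers this case without an additional assumption, so you are not losing anything relative to the original, but be aware that the step is not as innocent as it looks in the full generality claimed.
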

\begin{proof}
Let $\bar x\in X$ be the fixed first-stage solution, then it holds
\[
\left\{ (x,y)\in\conv{Z} \ | \ x=\bar x \right\} = \left\{\bar x\right\} \times \conv{Y(\bar x )} .
\]
Clearly problem
\[
\max_{c\in U} \min_{(x,y)\in\left\{ \bar x\right\} \times \conv{Y(\bar x )}} \ f(x,y,c)
\]
is equivalent to
\begin{equation}\label{eq:evaluateobjfct}
\max_{c\in U} \min_{y\in Y(\bar x)} f(\bar x,y,c)
\end{equation}
which proves the result.
\end{proof}
The result of Proposition \ref{prop:exactIfFixed} indicates that the lower bound \eqref{eq:convlowerbound} can be integrated in a branch \& bound procedure.
 
In \cite{buchheimkurtzconvex} it was proved that, given an oracle to solve the deterministic problem over $Y(\bar x)$ for each given $\bar x$, if $f$ is linear in $(x,y)$ and $c$ and under further mild assumptions, Problem \eqref{eq:evaluateobjfct} can be solved in oracle-polynomial time. Together with Proposition \ref{prop:exactIfFixed} a direct consequence is that, if the dimension $n_1$ of the first-stage solutions is fixed, then we can enumerate over all possible first-stage solutions and compare the objective values in oracle-polynomial time. Hence, Problem \eqref{eq:two-stageprob} can be solved in polynomial time given an oracle for the optimization problem over $Y(x)$ for each $x\in X$.

The authors in \cite{buchheimkurtzconvex} present a practical algorithm, based on the idea of column-generation for the case that $f$ is a linear function. Applied to the more general Problem \eqref{eq:convlowerbound} the algorithm can be derived as follows: The algorithm starts with a subset of solutions $Z'\subset Z$, leading to problem
\begin{equation}\label{eq:convlowerboundsubset}
\max_{c\in U} \min_{z\in\conv{Z'}} f(z,c) ,
\end{equation}
and then iteratively adds new solutions to $Z'$ until optimality can be ensured. The solution which is added in each iteration is the one which has the largest impact on the optimal value. To find this solution Problem \eqref{eq:convlowerboundsubset} can be reformulated by applying a level set transformation. The reformulation is given by 
\begin{equation}\label{eq:dualprobalgorithm}
\begin{aligned}
\max & \quad \mu \\
s.t. & \quad f(z,c)\ge \mu \ \ \forall z\in Z' \\
& \quad \mu\in\R, \ c\in U.
\end{aligned}
\end{equation}
For an optimal solution $(\mu^*,c^*)$ of the latter problem, we search for the solution $z\in Z$ which most violates the constraint $f(z,c^*)\ge \mu^*$, i.e. the solution with the largest improvement on the optimal value of Problem~\eqref{eq:convlowerboundsubset}. The latter task can be done by minimizing the objective function $f(z,c^*)$ over all $z\in Z$, i.e. solving the deterministic problem \eqref{eq:deterministicprob} under scenario $c^*$ by using any exact algorithm. If we can find a $z^*\in Z$ such that $f(z^*,c^*) < \mu^*$, then we add $z^*$ to $Z'$ and repeat the procedure. If no such solution can be found, then $f(z,c^*)\ge \mu^*$ holds for all $z\in Z$ and therefore $\mu^*$ is the optimal value of \eqref{eq:convlowerbound}. 
The procedure described above is presented in Algorithm \ref{alg:columngenerationlinear}.

\begin{algorithm}[htb]
\caption{~~Algorithm to calculate the lower bound~\eqref{eq:convlowerbound}}
\label{alg:columngenerationlinear}                           
\begin{algorithmic}[1]
  \Require Convex $U\subset\R^{m}$, $Z\subseteq\binvar{n_1+n_2}$
  \Ensure Optimal value of Problem~\eqref{eq:convlowerbound} and a set of feasible solutions $Z'\subseteq Z$
  \State Choose any $z_0\in Z$ and set $Z':=\left\{ z_0\right\}$
  \Repeat
  \State Calculate an optimal solution $(\mu^*, c^*)$ of\vspace*{-2ex}
  $$\max\;\{\mu\mid f(z,c)\ge \mu \ \ \forall z\in Z',~\mu\in\R,~c\in U\}$$\vspace*{-4ex}\label{step:dual}
  \State Calculate an optimal solution $z^*$ of\vspace*{-2ex}
  $$\min_{z\in Z}f(z,c^*)\;$$\vspace*{-4ex}\label{step:if}
  \State Add $z^*$ to $Z'$
  \Until{$f(z^*,c^*) \ge \mu^*$}\\
\Return $\mu^*, Z'$
\end{algorithmic}
\end{algorithm}

Note that the Problem in Step \ref{step:dual} depends on the uncertainty set~$U$ and on the properties of $f$. If $f$ is a linear function, for polyhedral or ellipsoidal uncertainty sets this is a continuous linear or quadratic problem, respectively. Both problems can be solved by the latest versions of optimization software like CPLEX~\cite{cplex128}. Therefore the algorithm can be implemented for each deterministic problem by using any exact algorithm to solve the deterministic problem in Step \ref{step:if}. The main advantage of this feature is that we do not have to restrict to deterministic problems which can be modeled by a linear compact formulation as it is the case in \cite{arslan2019decomposition}. Instead we can use any combinatorial algorithm or even mixed-integer formulations where the uncertain parameters appear in the constraints; see Section \ref{sec:hub}. We only require an arbitrary procedure which returns an optimal solution for the given scenario. In~\cite{eufingerrobust} the authors applied the latter algorithm to the min-max-min robust capacitated vehicle routing problem and showed that on classical benchmark instances the number of iterations of Algorithm \ref{alg:columngenerationlinear} is significantly smaller than the dimension of $Z$ in general. 

Note that besides the optimal value of Problem \eqref{eq:two-stageprob} the algorithm returns a set of feasible solutions $Z'\subseteq Z$ and not a solution in $\conv{Z}$. By the correctness of the algorithm the optimal solution in $\conv{Z}$ must be contained in $\conv{Z'}$ and could be calculated by finding the optimal convex combination of the solutions in $Z'$ which can be done by solving the problem
\begin{equation}\label{eq:optimalConvComb}
\max_{c\in U} \min_{\substack{\lambda\ge 0\\ \sum_{z\in Z'}\lambda_z = 1\\ \tilde z=\sum_{z\in Z'}\lambda_z z}} f(\tilde z,c)
\end{equation}
for the given set $Z'$. If $f$ is linear in $z$ and $c$ then the latter problem is equivalent to
\[
\min_{\substack{\lambda\ge 0\\ \tilde z=\sum_{z\in Z'}\lambda_z z}} \max_{c\in U} f(\tilde z,c)
\]
and by dualizing the inner maximization problem over $U$ this is a continuous linear or quadratic problem for polyhedral or ellipsoidal uncertainty, respectively.
Nevertheless in our branch \& bound procedure for non-linear functions $f$ the set $Z'$ is sufficient as we will see in Section \ref{sec:branchandboundlinear}. A practical advantage of the set $Z'$ is that it contains a set of second-stage policies which can be used in practical applications. Instead of solving the second-stage problem each time after a scenario occured, which may be a computationally hard problem, we can choose the best of the pre-calculated second-stage policies in $Z'$ for the actual scenario. The latter task can be done by just comparing the objective values of all solutions in $Z'$ for the given scenario. Note that the returned set of solutions need not contain the optimal solution for each scenario. Nevertheless we will show in Section \ref{sec:computations} that the calculated solutions perform very well in average over random scenarios in $U$.


\subsection{Oracle-Based Branch \& Bound Algorithm}\label{sec:branchandboundlinear}
Using the results of the previous section we can easily derive a classical branch \& bound procedure to solve Problem \eqref{eq:two-stageprob}. The idea is to branch over the first-stage solutions $x\in X$ and to calculate the lower bound \eqref{eq:convlowerbound} in each node of the branch \& bound tree to possibly prune the actual branch of nodes. All necessary details needed to implement a branch \& bound procedure are presented in the following.
\paragraph{Handling Fixations}
In each node of the branch \& bound tree we have a given set of fixations for the $x$-variables, i.e. a set of  indices $I_0\subset [n_1]$ such that $x_i=0$ for each $i\in I_0$ and a given set of indices $I_1\subset [n_1]\setminus I_0$ such that $x_i=1$ for each $i\in I_1$. All indices in $[n_1]\setminus \left( I_0\cup I_1\right)$ are free. Therefore in each node for the given fixations we have to solve the problem
\begin{equation}\label{eq:convlowerboundfixations}
\max_{c\in U} \min_{\substack{(x,y)\in\conv{Z} \\ x_i=0 \ \forall \ i\in I_0 \\ x_i=1 \ \forall \ i\in I_1}}  f(x,y,c) 
\end{equation}
or to decide if the latter problem is infeasible. It is easy to see that the latter problem, if it is feasible, can be solved by Algorithm \ref{alg:columngenerationlinear} by including the given fixations into the set $Z$. Note that here the oracle for the deterministic problem must be able to handle variable-fixations. Nevertheless for most of the classical problems fixations can easily be implemented in most algorithms.
\paragraph{Warm Starts}
In each node of the branch \& bound tree Algorithm \ref{alg:columngenerationlinear} returns a set $Z'\subset Z$ of feasible solutions satisfying the given fixations. For each possible child-node we can select the set $Z''\subset Z'$ of solutions which satisfy the new fixations and warm-start Algorithm \ref{alg:columngenerationlinear} with the set $Z''$ in the child node.
\paragraph{Branching Strategy}
An easy branching strategy can be established as follows: For the calculated set of solutions $Z'$ returned by Algorithm \ref{alg:columngenerationlinear} we define the vector $\bar x\in [0,1]^{n_1}$ by
\begin{equation}\label{eq:dualsolutionavg}
\bar x_i =  \frac{1}{|Z'|}\sum_{(x,y)\in Z'} x_i 
\end{equation}
for all $i\in [n_1]$, i.e. the value $\bar x_i$ is the fraction of solutions in $Z'$ for which $x_i=1$ holds. We can then use any of the classical branching rules, e.g. we can decide to branch on the index $i$ for which the value $\bar x_i$ is closest to $0.5$.

Another computationally more expensive approach is to calculate the optimal convex combination of the solutions in $Z'$, i.e. after calculating the optimal $Z'$ by Algorithm \ref{alg:columngenerationlinear} we calculate an optimal solution $\lambda^*$ of Problem \eqref{eq:optimalConvComb} and define \begin{equation}\label{eq:dualsolutionopt}\bar x = \sum_{z=(x,y)\in Z'} \lambda_z^* x.\end{equation} Now we can again use any classical branching-strategy on $\bar x$. Note that if a first-stage variable has the same value in each of the solutions in $Z'$ then also the corresponding entry of $\bar x$ has this value.

When going over to the next open branch \& bound node to be processed, we choose the one with the smallest lower-bound.
\paragraph{Calculating Feasible Solutions}
In each node of the branch \& bound tree we want to find a feasible solution to update the upper bound on our optimal value. We do this as follows: In each branch \& bound node Algorithm \ref{alg:columngenerationlinear} calculates a set $Z'\subseteq Z$ of feasible solutions. If all of the generated solutions in $Z'$ have the same first-stage solution $x$, then the optimal solution of \eqref{eq:convlowerboundfixations} has binary first-stage variables and we obtain a feasible solution $x\in X$ which has the objective value $\mu^*$ returned by the algorithm. If the first-stage variables are not the same for all $z\in Z'$ then we can either choose an arbitrary first-stage solution given by any $z\in Z'$ or we can calculate the objective value of all first-stage solutions in $Z'$ and choose the one with the best objective value. To this end we have to solve
\[
\max_{c\in U}\min_{y\in Y(\tilde x)} f(\tilde x,y,c),
\]
for any first-stage solution $\tilde x$ given in $Z'$. Note that the latter problem again can be solved by Algorithm \ref{alg:columngenerationlinear} replacing the deterministic problem in Step \ref{step:if} by
\[
\min_{y\in Y(\tilde x)}f(\tilde x,y,c^*) .
\]
If $X=\binvar{n_1}$, as it is the case for the hub-location problem (see Section \ref{sec:computations}), then calculating all objective values as above can be avoided and finding a good feasible solution can be done by rounding each component of the vector $\bar x$ calculated in the latter paragraph.

\subsection{Oracle-Based Column-and-Constraint Algorithm}\label{sec:CCG_linear}
In \cite{zeng2013twostage} a column-and-constraint generation method (CCG) was introduced to solve two-stage robust problems with real recourse variables. In \cite{zhao2012exact} the authors show how the algorithm can be applied to two-stage robust problems with mixed-integer recourse variables. In both cases the algorithm is studied for problems with uncertain constraints. In this section we will apply the algorithm to  Problem \eqref{eq:two-stageprob}, i.e. to the special case of objective uncertainty, and show that we can again use Algorithm \ref{alg:columngenerationlinear} to solve one crucial step in the CCG. In the following we derive the CCG algorithm for Problem \eqref{eq:two-stageprob}. For more details see~\cite{zeng2013twostage,zhao2012exact}.

Using a level set transformation Problem \eqref{eq:two-stageprob} can be reformulated by 
\begin{align*}
\min & \ \ \mu \\
s.t. \quad & \mu\ge \max_{c\in U}\min_{y\in Y(x)} f(x,y,c) \\
& x\in X, \ \mu\in\R.
\end{align*}
If we choose any finite subset of scenarios $\left\{c^1,\ldots ,c^l\right\}\in U$ we obtain the lower bound
\begin{align*}
\min & \ \ \mu \\
s.t. \quad & \mu\ge \min_{y\in Y(x)} f(x,y,c^i) \ \ i=1,\ldots ,l \\
& x\in X, \ \mu\in\R,
\end{align*}
which is equivalent to problem
\begin{equation}\label{eq:lowerboundCCG}
\begin{aligned}
\min & \ \ \mu \\
s.t. \quad & \mu\ge f(x,y^i,c^i) \ \ i=1,\ldots ,l \\
& x\in X, \ \mu\in\R, \ y^i\in Y(x) \ \ i=1,\ldots ,l .
\end{aligned}
\end{equation}
The algorithm in \cite{zeng2013twostage} now iteratively calculates an optimal solution $(x^*,\mu^*)$ of the latter problem \eqref{eq:lowerboundCCG}, which is a lower bound for Problem \eqref{eq:two-stageprob}, and afterwards calculates a worst-case scenario $c^{l+1}\in U$ by
\begin{equation}\label{eq:worstcasescenarioCCG}
c^{l+1} = \argmax_{c\in U}\min_{y\in Y(x^*)} f(x^*,y,c).
\end{equation}
The optimal value of Problem \eqref{eq:worstcasescenarioCCG} is the objective value of solution $x^*\in X$ and therefore an upper bound for Problem \eqref{eq:two-stageprob}. Afterwards new variables $y^{l+1}$ and the constraint 
\[
\mu\ge f(x,y^{l+1},c^{l+1})
\]
are added to Problem \eqref{eq:lowerboundCCG} and we iterate the latter procedure until
\[
\mu^*\ge \max_{c\in U}\min_{y\in Y(x^*)} f(x^*,y,c) .
\]
Clearly a solution $(x^*,\mu^*)$ fulfilling the latter condition is optimal for Problem \eqref{eq:two-stageprob}. Following the proof of Proposition \ref{prop:exactIfFixed} the worst-case scenario in \eqref{eq:worstcasescenarioCCG} can be calculated by Algorithm \ref{alg:columngenerationlinear}. This can be done since we do not consider uncertainty in the constraints, while in the more general framework in \cite{zeng2013twostage} this is not possible.

The main difference of the latter procedure to our branch \& bound algorithm is that in a branch \& bound node only a subset of first-stage variables is fixed while the rest is relaxed. Then we use Algorithm \ref{alg:columngenerationlinear} to calculate a lower bound for the given fixations. In the CCG procedure in each iteration a first-stage solution is calculated by Problem \eqref{eq:lowerboundCCG} and therefore all variables are fixed when Algorithm \ref{alg:columngenerationlinear} is applied to calculate the worst-case scenario. Nevertheless the number of constraints and the number of variables of Problem \eqref{eq:lowerboundCCG} increase iteratively, since each second-stage variable has to be duplicated in each iteration, while in the branch \& bound procedure we always iterate over the same number of first-stage variables. In Section \ref{sec:computations} we will compare both algorithms on benchmark instances of the uncapacitated single-allocation hub location problem and the capital budgeting problem.

\section{Applications}
\subsection{The Uncapacitated Single-Allocation Hub Location Problem with Uncertain Demands}
\label{sec:hub} 
In this section the oracle-based branch \& bound algorithm is exemplarily applied to the single-allocation hub location problem which can be naturally defined as a two-stage problem. Furthermore due to its quadratic objective function it perfectly fits into the non-linear framework. 

Hub-location problems address the strategic planning of a transportation network with many sources and sinks. In many applications sending all commodities over direct connections would be too expensive in operation. Instead, some locations are considered to serve as transshipment points and are then called hubs. Thus, strongly consolidated transportation links are established. The bundling of shipments usually outweighs the additional costs of hubs and detours. Important applications of this problem arise in air freight \cite{jaillet1996airline}, postal and parcel transport services \cite{ernst1996efficient}, telecommunication networks \cite{klincewicz1998hub} and public transport networks \cite{nickel2001hub}. The recent surveys of \cite{alumur2008network} and \cite{campbell2012twenty} provide a comprehensive overview of the various variations and solution approaches of the hub location problem.

The main source of uncertainty in single-allocation hub location problems are demand fluctuations. Thus, it is important to include this uncertainty when deciding hub locations and allocations of the nodes to the hubs. Installing a hub is a long-term decision which lasts for many years or even for several decades. Nonetheless, the allocation to the hub nodes are mid-to-short-term decisions as they can be changed over time. In \cite{rostami2018stochastic} the variable allocation variant for single-allocation hub location problems under stochastic demand uncertainty is proposed. 

We consider a directed graph $G=(N,A)$, where $N = \{1,2, \ldots, n\}$ corresponds to the set of nodes that denote the origins, destinations, and possible hub locations, and $A$ is a set of arcs that indicate possible direct links between the different nodes. Let~$w_{ij}\ge 0$ be the amount of flow to be transported from node~$i$ to node~$j$ and $d_{ij}$ the distance between two nodes $i$ and $j$. We denote by~$O_i=\sum_{j\in N}w_{ij}$ and $D_i=\sum_{j\in N}w_{ji}$ the total outgoing flow from node~$i$ and the total incoming flow to node~$i$, respectively. For each~$k\in N$, the value $f_{k}$ represents the fixed set-up cost for locating a hub at node~$k$. The cost per unit of flow for each path~$i-k-m-j$ from an origin node~$i$ to a destination node~$j$ passing through hubs~$k$ and~$m$ respectively, is~$\chi d_{ik}+\alpha d_{km}+\delta d_{m j}$, where~$\chi$,~$\alpha$, 
and~$\delta$ are the nonnegative collection, transfer, and distribution costs respectively and $d_{ik}$, $d_{km}$, and $d_{m j}$ are the distances between the given pairs of nodes. Typically $\alpha\le \min\left\{ \chi, \delta\right\}$ since otherwise using a hub would not be beneficial. Note that if hub nodes are fully interconnected, every path between an origin and a destination node will contain at least one and at most two hubs.
The SAHLP consists of selecting a subset of nodes as hubs and assigning the remaining nodes to these hubs such that each spoke node is assigned to exactly one hub with the objective of minimizing the overall costs of the network. 

To formulate the SAHLP, we follow the first formulation of this problem introduced by O'Kelly \cite{o1987quadratic}. Two types of decision variables are introduced. First, the
\[
x_{k}=\begin{cases}
\begin{array}{ll}
  1 & \textnormal{ if node~$k$ is a hub node}\\
  0 & \textnormal{ otherwise.}
\end{array}
\end{cases}
\]
variables indicate whether a node is used as hub in the transportation network. Second, the 
\[
y_{ik}=\begin{cases}
\begin{array}{ll}
  1 & \textnormal{ if node~$i$ is allocated to a hub located at node $k$}\\
  0 & \textnormal{ otherwise.}
\end{array}
\end{cases}
\]
variables show how the nodes are allocated to the hub nodes.
SAHLP can then be formulated as the following binary quadratic program:
\begin{align}
\label{eq:obj-sahlp}\text{min} \quad  & \sum_{k \in N}f_k x_{k} + \sum_{i\in N}\sum_{k \in N} d_{ik}\,(\chi\, O_i+ \delta\, D_i)\, y_{ik} + \sum_{i,k,j,m \in N} \alpha\,w_{ij}d_{km} y_{ik}y_{jm} \\
\label{eq:oneAlloc}\mbox{s.t.} \quad & \sum_{k \in N}y_{ik}= 1 \quad\quad  i \in N\\[1ex]
\label{eq:onlyToHub}&y_{ik}\leq x_{k} \quad\quad i,k \in N\\[1ex]
\label{eq:xbin}&y_{ik} \in \{0,1\}, \ x_{k} \in \{0,1\} \quad\quad i,k \in N.
\end{align}
The objective is to minimize the total costs of the network which includes the costs of setting up the hubs, the costs of collection and distribution of items between the spoke nodes and the hubs, and the costs of transfer between the hubs. Constraints \eqref{eq:oneAlloc} indicate
that each node $i$ is allocated to precisely one hub (i.e. single allocation) while Constraints~\eqref{eq:onlyToHub} enforce that node $i$ is allocated to a node $k$ only if $k$ is selected as a hub node. The binary conditions are enforced by Constraints \eqref{eq:xbin}.

%

In order to solve SAHLP, many solution methods have been proposed in the literature. The classical approach to obtain an exact solution is to linearize the quadratic objective function. In \cite{skorin1996tight} and \cite{ernst1996efficient} two mixed-integer linear programming (MILP) formulations for the problem have been proposed which are based on a path and a flow representation, respectively.  The path-based formulation in  \cite{skorin1996tight} has $O(|N|^4)$ variables and $O(|N|^3)$ constraints and its linear programming (LP) relaxation was shown to provide tight lower bounds. However, due to the large number of variables and constraints, the path-based formulation can only be solved for instances of relatively small sizes. Alternatively, the flow-based formulation of \cite{ernst1996efficient} uses only $O(|N|^3)$  variables and  $O(|N|^2)$ constraints to linearize the problem. To formulate the flow-based SAHLP model (SAHLP-flow), new variables $z_{ikm}$ are defined as the total amount of flow originating at node $i$ and routed via hubs located at nodes $k$ then $m$, respectively. SAHLP-flow is formulated as
\begin{align}
 \text{min} ~~ & \sum_{k\in N}f_k x_{k}+\sum_{i\in N}\sum_{k \in N} d_{ik}\,(\chi\, O_i+ \delta\, D_i)\, y_{ik} + \sum_{i\in N}\sum_{k\in N}\sum_{m \in N} \alpha \, d_{km}z_{ikm}\cr
\mbox{s.t.} \quad & \eqref{eq:oneAlloc}, \eqref{eq:onlyToHub}, \eqref{eq:xbin}\cr
\label{eq:flow1} & \sum_{m \in N}z_{ikm}-\sum_{m \in N}z_{imk}= O_iy_{ik}-\sum_{j \in N}w_{ij} y_{jk} \quad \forall i,k\\
\label{eq:flow2} & \sum_{m \in N}z_{ikm}\leq O_iy_{ik} \quad \forall i,k\\
\label{eq:flowy}&z_{ikm}\geq 0 \quad \forall i,k,m.
\end{align}
Similar to SAHLP, the objective function minimizes the hub setup costs, the costs of collection and distribution, and the inter-hub transfer costs. Besides Constraints \eqref{eq:oneAlloc}, \eqref{eq:onlyToHub}, \eqref{eq:xbin} which are also used in SAHLP, Constraints~\eqref{eq:flow1} are flow
balance constraints while Constraints~\eqref{eq:flow2} ensure that a flow is possible from spoke $i$ to hub $k$ only if node $i$ is allocated to hub $k$; see \cite{correia2010single}. Finally, Constraints~\eqref{eq:flowy} indicate the non-negativity restriction on the variables $z$. 

The presented flow-based formulation is typically regarded to be the most effective linearized formulation in order to obtain exact solutions for the single-allocation hub location problem. In our computations we use this simple solution method to solve Step \ref{step:if} in Algorithm \ref{alg:columngenerationlinear}. Note that although in the flow-based formulation the uncertain parameters $w_{ij}$ appear in the constraints, we can use this formulation as an oracle in our algorithm while other methods which require linear programming formulations without uncertainty in the constraints can not make use of it.


The SAHLP splits up naturally in first- and second-stage problems as the decision variables in the SAHLP are subject to different planning horizons as discussed above. Therefore, the two-stage robust SAHLP can be modeled as follows:
\begin{equation}\label{eq:hub-two-stageprob}\tag{SAHLP-2RP}
\begin{aligned}
\min_{x \in \{0,1\}^N}\max_{w \in U}\min_{y \in Y(x)} \sum_{k \in N}f_k x_{k} & + \sum_{i\in N}\sum_{k \in N} d_{ik}\,(\chi\, O_i+ \delta\, D_i)\, y_{ik} \\
& + \sum_{i,k,j,m \in N} \alpha\,w_{ij}d_{km} y_{ik}y_{jm},
\end{aligned}
\end{equation}
where 
\[Y(x) =  \{y \in \{0,1\}^{N \times N} : \sum_{k \in N} y_{ik}= 1, y_{ik}\leq x_{k} \quad\forall i,k \in N \}.\]
We assume that $U\subset\R_+^{n^2}$ is a convex uncertainty set. Note that this classical formulation is a quadratic two-stage robust problem. To solve Problem \eqref{eq:hub-two-stageprob} we use the branch \& bound procedure described in Section \ref{sec:lineartwostage}. To this end lower bounds can be calculated by Algorithm \ref{alg:columngenerationlinear} implementing the flow linearization SAHLP-flow in CPLEX (\cite{cplex128}) to solve the oracle in Step \ref{step:if}. The variable fixations in each node of the branch \& bound tree can be added as constraints to the SAHLP-flow formulation. 
\subsubsection{Computational Results}\label{sec:computations}
In this section we apply the branch \& bound method derived in Section \ref{sec:branchandboundlinear} and the CCG method presented in Section \ref{sec:CCG_linear} to the SAHLP. Both algorithms were implemented in C++. For the branch \& bound procedure we calculate the lower and upper bounds by Algorithm \ref{alg:columngenerationlinear} as discussed in the previous sections. The dual solution $\bar x$ is calculated as presented in \eqref{eq:dualsolutionavg}. The branching is performed on the variable $\bar x_i$ which is closest to $0.5$. A feasible solution is calculated by rounding the entries of $\bar x$ to the closest integer value. Note that by this rounding procedure we always obtain a feasible first-stage solution for the SAHLP since we do not have restrictions on the first-stage variables. For the selection of the next branch \& bound node to be processed we use the best-first strategy, i.e. the node with the smallest dual bound is processed next.

For the CCG algorithm we implemented Problem \eqref{eq:lowerboundCCG} in CPLEX 12.8 while Problem \eqref{eq:worstcasescenarioCCG} is solved by Algorithm \ref{alg:columngenerationlinear}. In Algorithm \ref{alg:columngenerationlinear} the dual problem in Step \ref{step:dual} is solved by CPLEX 12.8 \cite{cplex128}. As deterministic oracle in Step \ref{step:if} we use the flow linearization SAHLP-flow presented in Section \ref{sec:hub} which was also implemented in CPLEX 12.8. After termination of Algorithm \ref{alg:columngenerationlinear} we delete all solutions $z$ from the calculated set $Z'$ which have a non-zero slack in the dual problem in Step \ref{step:dual}, i.e. for which $f(z,c^*) > \mu^*$ in the last iteration of Algorithm \ref{alg:columngenerationlinear}. By dualizing the dual problem in Step \ref{step:dual} it can be shown that the optimal value does not change by throwing out all calculated solutions with non-zero slack.
\paragraph{Generation of Random Instances}
We generated random instances as follows: As basis for our instances we use a selection of instances of the AP and the CAB datasets which were intensively studied in the hub location literature. The AP instances are based on the mail flows of Australia Post and were introduced in \cite{ernst1996efficient}. The CAB instances contain airline passenger interactions between $25$ major cities in the United States of America and were first studied in \cite{o1987quadratic}. Both datasets can be found in \cite{orlibrary}. Since there is only one CAB instance available, we introduce three additional instances (cab1 to cab3) by varying the demand values as follows: For each node pair $i,j\in N$, the demand values are drawn randomly from the interval $[0.01 \bar w_{ij}, 10 \bar w_{ij}]$, where $\bar w_{ij}$ is the demand value of the original cab instance. 
The number of locations $n$ together with its pairwise distances $d_{ij}$ are given by the instance data. The set-up costs for hub locations are also given by the instance data in case of the AP instances. Accordingly to \cite{alumur2012hub}, the set-up cost at node $k$ are set to $15 \log(O_k)$ for the CAB instances. The collection, transfer and distribution costs are set to $\chi=3$, $\alpha=0.75$ and $\delta = 2$ for the AP instances while for the CAB instances $\chi=1$, $\delta=1$ and $\alpha$ is varied in $\left\{ 0.2,1\right\}$. For each instance and each $\Gamma\in\left\{ 0.02n^2, 0.1n^2\right\}$, rounded down if fractional, we generate $10$ random budgeted uncertainty sets which are defined by
\[
U_\Gamma = \left\{ w\in \R^{n^2} \ | \ w_{ij} = \bar w_{ij} + \delta_{ij}\hat w_{ij}, \ \sum_{i,j\in N}\delta_{ij}\le \Gamma, \ \delta_{ij}\in [0,1]\right\} .
\]
Here $\bar w$ are the flows given by the AP or CAB instances, respectively, while $\hat w_{ij}$ is chosen randomly in $[0,\bar w_{ij}]$ for each $i,j\in N$, i.e. the change in demand can be at most $100\%$ of the given mean $\bar w_{ij}$.
\paragraph{Analysis of Results}
\begin{table}[htb]
\centering
\resizebox{0.9\textwidth}{!}
{
\begin{tabular}{rcc|rrrrrrrrr}
Inst.& $n$& $\Gamma$& $\Delta_{\text{ad}}$(\%)& $t$(s)& $\#$Nodes& $\Delta_{\text{root}}$(\%)& $\#$Oracle& $i_{\text{lb}}$ & $i_{\text{ub}}$ & $\#$Sol. & $\Delta$(\%) \\
\hline
10LL &10&2&3.4&0.9&2.2&2.5&8.4&2.3&1.6&1.0&0.0 \\
10LL &10&10&10.8&1.8&3.6&4.5&15.2&2.8&1.4&1.0&0.0\\
\hline
20LL&20&8&5.0&3.4&1.0&0.0&3.0&2.0&1.0&1.0&0.0\\
20LL&20&40&14.5&10.4&2.4&9.4&8.8&2.4&1.0&1.0&0.0\\
\hline
25LL &25&12&4.4&10.1&1.0&0.0&4.0&2.0&2.0&1.0&0.0\\
25LL &25&62&13.3&11.7&1.0&0.0&4.4&2.2&2.2&1.2&0.0\\
\hline
40LL &40&32&5.9&150.5&1.2&26.6&3.9&2.1&1.0&1.0&0.0\\
40LL &40&160&15.1&223&1.6&5.6&6.0&2.6&1.1&1.0&0.0\\
\hline
50LL &50&50&7.0&530.3&1.4&10.7&5.6&2.3&1.9&1.0&0.0\\
50LL &50&250&17.1&1308.7&3.2&33.9&13.3&2.4&1.6&1.2&0.0\\
\hline
60LL &60&72&8.3&888.9&1.0&0.0&4.0&2&2.0&1.0&-\\
60LL &60&360&19.1&1001.3&1.0&0.0&4.0&2.0&2.0&1.0&-\\
\hline
70LL&70&98&7.9&1977.4&1.0&0.0&4.2&2.1&2.1&1.1&-\\
70LL&70&490&18.5&8632.2&3.2&11.6&17.0&3.0&2.1&1.0&-\\
\hline
75LL&75&112&8.5&5956.1&1.8&7.6&7.5&2.4&1.9&1.0&-\\
75LL&75&562&18.8&3349.2&1.0&0.0&4.0&2.0&2.0&1.0&-\\
\hline
90LL&90&162&9.8&7460.1&1.0&0.0&4.0&2.0&2.0&1.0&-\\
90LL&90&810&21.1&12681.1&1.6&0.0&6.7&2.1&2.0&1.0&-\\
\end{tabular}
 }
\caption{Results of the branch \& bound procedure for AP instances.}
\label{tbl:AP_instances}
\end{table}
\begin{table}[htb]
\centering
\resizebox{0.9\textwidth}{!}
{
\begin{tabular}{rcc|rrrrrrrrr}
Inst. & $\Gamma$ & $\alpha$ & $\Delta_{\text{ad}}$(\%)&  $t$(s) & $\#$Nodes & $\Delta_{\text{root}}$(\%) & $\#$Oracle & $i_{\text{lb}}$ & $i_{\text{ub}}$ & $\#$Sol. & $\Delta$(\%) \\ \hline 
cab0&12&0.2&4.6&17.0&1.4&1.4&4.7&2.2&1.1&1.0&0.0\\
cab0&12&1.0&9.7&217.9&2.6&9.4&11.8&2.7&1.8&1.0&0.0\\
cab0&62&0.2&12.0&21.1&1.8&3.3&6.3&2.1&1.2&1.0&0.0\\
cab0&62&1.0&22.0&258.3&2.8&4.8&11.8&2.3&1.8&1.0&0.0\\
\hline
cab1&12&0.2&5.1&19.3&1.6&1.7&6.6&2.1&1.9&1.0&0.0\\
cab1&12&1.0&11.3&405.1&7.8&5.2&36.9&2.8&1.9&1.0&0.0\\
cab1&62&0.2&12.8&11.3&1.0&0.0&4.0&2.0&2.0&1.0&0.0\\
cab1&62&1.0&23.4&353.9&5.2&6.2&25.7&2.8&2.1&1.1&0.0\\
\hline
cab2&12&0.2&4.9&19.5&1.4&1.6&4.7&2.2&1.1&1.0&0.0\\
cab2&12&1.0&10.8&145.6&2.0&6.1&8.3&2.3&1.8&1.0&0.0\\
cab2&62&0.2&12.9&17.4&1.4&1.7&4.7&2.2&1.1&1.0&0.0\\
cab2&62&1.0&23.6&114.2&1.4&0.4&6.1&2.4&2.1&1.2&0.0\\
\hline
cab3&12&0.2&6.2&57.5&3.4&29.8&14.9&2.9&1.4&1.0&0.0\\
cab3&12&1.0&10.8&171.8&2.6&7.4&11.7&2.4&2.1&1.1&0.0\\
cab3&62&0.2&14.7&27.5&1.8&5.1&6.3&2.4&1.1&1.0&0.0\\
cab3&62&1.0&23.3&173.3&2.4&2.8&11.8&2.6&2.3&1.3&0.0\\
\end{tabular}
 }
\caption{Results of the branch \& bound procedure for CAB instances. All instances have $n=25$ locations.}
\label{tbl:CAB_instances}
\end{table}

The results for the branch \& bound procedure are presented in Table \ref{tbl:AP_instances} and \ref{tbl:CAB_instances}. Each row shows the average over all $10$ random instances of the following values from left to right: The instance name; the number of locations $n$ for the AP instances; the value $\Gamma$ of the budgeted uncertainty set $U_\Gamma$; the value of $\alpha$ for the CAB instances; the adaptivity gap $\Delta_{\text{ad}}$ in \%, i.e. the percental difference between the optimal value of Problem \eqref{eq:two-stageprob} and the deterministic problem with weights $\bar w$; the total solution time $t$ in seconds; the number of nodes solved in the branch \& bound tree; the percental difference $\Delta_{\text{root}}$ of the lower bound calculated in the root problem to the optimal value of Problem \eqref{eq:two-stageprob}; the total number of oracle calls; the average number of iterations $i_{\text{lb}}$ of Algorithm \ref{alg:columngenerationlinear} to calculate the lower bounds; the average number of iterations $i_{\text{ub}}$ of Algorithm \ref{alg:columngenerationlinear} to calculate the upper bounds; the number of solutions returned by the branch \& bound method or the number of iterations of the CCG, respectively; the average percental difference $\Delta$ (over $10$ random scenarios in $U_\Gamma$) between the best solution in $Z'$ and the deterministic optimal solution in each scenario. To be more precicely, to obtain the value $\Delta$ we generate $10$ random scenarios in $U_\Gamma$ by the following procedure: We first create $n^2$ equally distributed random numbers $s_{i}$ in $[0,\Gamma]$ and define $s_0:=0$. Assume the numbers are given in increasing order. We then define $\delta_i:= s_i-s_{i-1}$. If $\delta\le \1$ is not true we start the procedure again. The random scenario is then given by $w$ with
\[
w_{ij} = \bar w_{ij} + \delta_{in+j} \hat w_{ij} .
\]
After generating $10$ random scenarios $w^1,\ldots w^{10}$, in each scenario we compare the costs of the best solution in $Z'$ to the costs of the optimal solution in the scenario, i.e. for the optimal first-stage solution $\bar x$ we define
\[
\Delta_l:= \frac{\min_{(\bar x,y)\in Z'} f(\bar x, y , w^l) - \min_{y\in Y(\bar x)} f(\bar x,y, w^l)}{\min_{(\bar x,y)\in Z'} f(\bar x, y , w^l)}
\]
and set $\Delta$ to the average of all $\Delta_l$. For the CCG algorithm we define $Z'$ as the set of solutions calculated in the last iteration by Problem \eqref{eq:lowerboundCCG}. Note that since the set of optimal second-stage solutions in $Z'$ is not unique and especially may not be the same for both algorithms, the value of $\Delta$ can be different for the branch \& bound procedure and for the CCG.

The results for the AP instances are shown in Table \ref{tbl:AP_instances}. The adaptivity gap increases with $\Gamma$ and with the dimension. The number of calculated nodes in the branch \& bound tree are in most cases close to $1$ and seems to remain constant with increasing dimension. Nevertheless the run-time increases with the dimension and with $\Gamma$ which is mainly due to the increasing run-time of Algorithm \ref{alg:columngenerationlinear}. Here with higher dimension the calculation time of the deterministic problem increases, while with increasing $\Gamma$ the number of iterations of Algorithm \ref{alg:columngenerationlinear} increases which was already observed in \cite{buchheimkurtzconvex,eufingerrobust}. Another positive observation is that the root gap is very small in general, mostly $0$ and never larger than $34\%$. The number of iterations of Algorithm \ref{alg:columngenerationlinear} is larger for the calculations of the lower bound than for the upper bound, which is because not all hub variables are fixed in the former case. Nevertheless the number of iterations is very low and never larger than $2.2$ for the lower bound and $1.2$ for the upper bound. This leads to a very small number of policies calculated by Algorithm \ref{alg:columngenerationlinear} and to a very small number of oracle calls in total. Finally the values of $\Delta$ indicate that the returned second-stage solutions are optimal in most of the scenarios, as $\Delta$ is $0$ for most of the instances. Note that for larger dimensions due to the time consuming computations we did not determine the $\Delta$ values.

The computations for the CAB instances are presented in Table \ref{tbl:CAB_instances}. The results look similar to the results related to the AP instances. The adaptivity gap is larger for larger values of $\alpha$ and $\Gamma$. The root gap is again very small for most of the instances and never larger than $30\%$. The number of nodes in the branch \& bound tree is very low, but in general higher than for the AP instances. Nevertheless it is never larger than $8\%$ in average. In contrast to the AP instances the total run-time does not increase much with increasing $\Gamma$. Instead the run-time increases significantly with increasing $\alpha$. The reason for this is the larger number of iterations performed by Algorithm \ref{alg:columngenerationlinear} to calculate the lower and the upper bounds. 
Comparing the calculated solutions to the optimal values on random scenarios, the percental difference $\Delta$ is again very close to $0$ for all of the instances.
\begin{table}[htb]
\centering
\resizebox{0.6\textwidth}{!}
{
\begin{tabular}{rcc|rrrrr}
Inst. & $n$ & $\Gamma$ & $t$(s) & $t_{\text{lb}}$(s) & $t_{\text{ub}}$(s) & $\#$Iter. & $\Delta$(\%) \\ \hline 
10LL &10&2&1.8&0.4&0.1&3.7&0.0 \\
10LL &10&10&5.9&1.1&0.1&4.3&0.0 \\
\hline
20LL&20&8&9.1&2.3&0.4&3.0&0.0 \\
20LL&20&40&73.6&14.6&0.4&3.8&0.0 \\
\hline
25LL &25&12&31.1&8.5&1.1&3.0&0.0 \\
25LL &25&62&37.3&10.5&1.1&3.0&0.0 \\
\hline
40LL &40&32&4682.0&1093.9&5.3&3.9&0.0 \\
40LL &40&160&2660.3&720.9&5.5&3.3&0.0 \\
\hline
50LL&50&50&8606.9&2230.8&15.2&3.6&-\\
50LL&50&250&57557.4&12632.5&14.1&4.1&-\\
\end{tabular}
 }
\caption{Results of the CCG algorithm for AP instances.}
\label{tbl:AP_instances_CCG}
\end{table}
\begin{table}[htb]
\centering
\resizebox{0.5\textwidth}{!}
{
\begin{tabular}{rcc|rrrrr}
Inst. & $\Gamma$ & $\alpha$ & $t(s)$ & $t_{\text{lb}}$(s) & $t_{\text{ub}}$(s) & $\#$Iter. & $\Delta$(\%) \\ \hline 
cab&12&0.2&56.6&15.7&0.9&3.1&0.0\\
cab&12&1&4526.9&850.7&1.1&4.0&0.0\\
cab&62&0.2&78.8&19.8&0.9&3.4&0.0\\
cab&62&1&9663.3&1798.0&1.2&4.1&0.0\\
\hline
cab1&12&0.2&53.1&12.8&1.0&3.2&0.0\\
cab1&12&1&3131.2&660.4&1.0&4.6&0.0\\
cab1&62&0.2&32.2&8.8&1.0&3.0&0.0\\
cab1&62&1&10760.1&1914.9&1.0&5.0&0.0\\
\hline
cab2&12&0.2&67.3&18.2&0.8&3.2&0.0\\
cab2&12&1&1616.5&402.0&1.0&3.6&0.0\\
cab2&62&0.2&63.0&16.2&0.8&3.3&0.0\\
cab2&62&1&1000.3&294.4&1.1&3.2&0.0\\
\hline
cab3&12&0.2&285.4&57.3&0.9&4.0&0.0\\
cab3&12&1&1382.1&366.1&1.0&3.5&0.0\\
cab3&62&0.2&121.4&30.9&0.8&3.5&0.0\\
cab3&62&1&1899.3&492.3&1.1&3.7&0.0\\
\end{tabular}
 }
\caption{Results of the CCG algorithm for CAB instances. All instances have $n=25$ locations.}
\label{tbl:CAB_instances_CCG}
\end{table}
\begin{figure}[htb]
\centering
\includegraphics[width=0.6\textwidth]{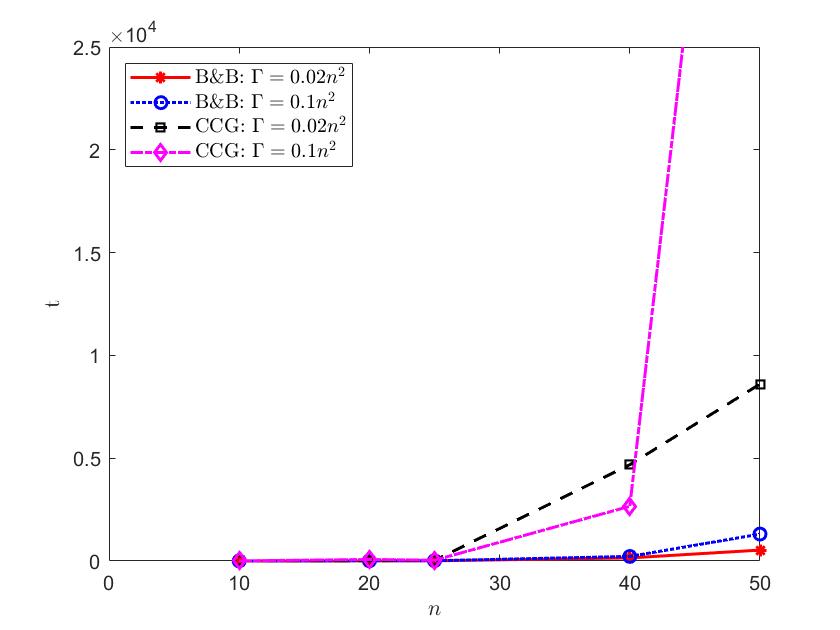}
\caption{Development of the runtime in seconds of both algorithms.}
\label{fig:Runtimes}
\end{figure} 

All results for the CCG algorithm are presented in Table \ref{tbl:AP_instances_CCG} and \ref{tbl:CAB_instances_CCG}. Each row shows the average over all $10$ random instances of the following values from left to right: The instance name; the number of locations $n$ for the AP instances; the value $\Gamma$ of the budgeted uncertainty set $U_\Gamma$; the value of $\alpha$ for the CAB instances; the total solution time $t$ in seconds; the average time $t_{\text{lb}}$ in seconds to solve the lower bound Problem \eqref{eq:lowerboundCCG}; the average time $t_{\text{ub}}$ in seconds to solve the upper bound Problem \eqref{eq:worstcasescenarioCCG}; the number of solutions $l$ calculated by Problem \eqref{eq:lowerboundCCG} which is equal to the number of iterations of the CCG algorithm; the average percental difference $\Delta$ (over $10$ random scenarios $\tilde w\in U_\Gamma$) between the best of the solutions calculated in the last iteration by Problem \eqref{eq:lowerboundCCG} and the deterministic optimal solution in each scenario $w$; see the definition of $\Delta$ above.

The results of the CCG algorithm are less convincing. We could solve AP instances up to $50$ locations in reasonable time, while for the branch \& bound procedure we managed to solve instances with $90$ locations. Furthermore the runtime is at least three times as large as for the branch \& bound method for most of the instances and even larger for growing dimension. The same effect holds for the CAB instances. Here the runtime is much higher for the instances with $\alpha=1$. The large runtime of the CCG is mainly caused by the lower bound problem \eqref{eq:lowerboundCCG}. The calculations of the upper bound, solved by Algorithm \ref{alg:columngenerationlinear}, are less time consuming, at most $6$ seconds in average. The number of calculated solutions, i.e. the number of iterations, is slightly larger than for the branch \& bound procedure but still very small, never larger than $5$. A positive effect is that the performance $\Delta$ of the calculated solutions on random scenarios is very close to $0$ for all instances.

In Figure \ref{fig:Runtimes} we compare the runtimes in seconds of both algorithms. The results show that the runtime of the CCG method increases rapidly for more than $25$ locations and is always much larger than the runtime of the branch \& bound method. For the larger value of $\Gamma$ the run-time of the CCG method explodes if $n$ is larger than $40$.

\paragraph{Analysis of Results for Hard Instances}
For the realistic instances calculated above the number of nodes in the branch \& bound tree, the number of iterations of the CCG as well as the number of iterations of Algorithm \ref{alg:columngenerationlinear} is very low. The same effect occurs for most of the randomly generated instances we tested. To test the boundaries of our algorithm we generated further instances which are generated as the instances above with the only difference that the values $\hat w_{ij}$ are randomly drawn in $[0,10 w_{ij}]$, i.e. the uncertainty sets are much larger. Furthermore for the AP instances we varied $\alpha\in\left\{ 0.75, 1.5\right\}$. The results for the branch \& bound procedure are presented in Table \ref{tbl:AP_instances_bigDeviation}. For the CCG algorithm we could not even solve instances with $25$ locations in reasonable time.
\begin{table}[htb]
\centering
\resizebox{0.9\textwidth}{!}
{
\begin{tabular}{rcc|rrrrrrrrr}
Inst.& $n$& $\alpha$& $\Delta_{\text{ad}}$(\%)& $t$(s)& $\#$Nodes& $\Delta_{\text{root}}$(\%)& $\#$Oracle& $i_{\text{lb}}$ & $i_{\text{ub}}$ & $\#$Sol. & $\Delta$(\%) \\ \hline 
10LL&10&0.75&62.2&14.3&21.4&5.5&165.8&5.7&1.9&1.1&0.0\\
10LL&10&1.5&97.9&8.8&11.4&2.9&90.7&5.7&2.5&2.2&0.0\\
\hline
20LL&20&0.75&107.9&152.1&13.4&6.8&113.2&6.6&2.0&1.3&0.0\\
20LL&20&1.5&145.8&192.0&10.0&5.6&101.5&6.1&3.4&2.6&0.1\\
\hline
25LL&25&0.75&105.8&403.3&17.8&3.9&135.6&5.5&1.9&1.1&0.0\\
25LL&25&1.5&142.0&726.5&15.4&6.0&167.4&7.5&3.3&2.6&0.1\\
\hline
40LL&40&0.75&128.2&4095.7&13.2&4.7&111.6&6.4&2.4&1.2&0.1\\
40LL&40&1.5&162.9&5671.4&6.8&1.5&72.2&6.6&3.8&2.3&0.1\\
\hline
50LL&50&0.75&134.2&2542.0&5.4&2.4&29.9&3.3&2.2&1.1&-\\
50LL&50&1.5&174.5&19425.2&6.2&0.5&74.7&7.3&4.5&3.4&-\\
\hline
cab&25&0.2&100.6&884.8&32.8&3.2&313.9&7.4&1.8&1.4&0.0\\
cab&25&1.0&194.8&13718.3&22.0&1.1&424.3&11.1&7.4&4.8&0.2\\
\hline
cab1&25&0.2&114.5&865.6&34.2&5.0&325.1&7.1&2.3&1.8&0.0\\
cab1&25&1.0&219.3&15129.7&22.6&0.6&508.2&12.6&7.8&4.8&0.3\\
\hline
cab2&25&0.2&116.1&570.9&21.0&3.6&186.6&6.5&1.7&1.3&0.0\\
cab2&25&1.0&233.6&16103.7&18.8&1.0&418.4&11.4&7.3&6.0&0.2\\
\hline
cab3&25&0.2&113.7&312.5&12.8&3.0&98.1&5.9&1.5&1.0&0.0\\
cab3&25&1.0&229.9&10954.0&15.0&0.8&254.3&10.0&6.1&4.9&0.2\\
\end{tabular}
}
\caption{Results of the branch \& bound procedure for instances with large deviations and $\Gamma=0.1n^2$.}
\label{tbl:AP_instances_bigDeviation}
\end{table}

\begin{table}[htb]
\centering
\resizebox{0.7\textwidth}{!}
{
\begin{tabular}{rcc|rrrrrrr}
Inst. & $n$ & $\alpha$ & $t(s)$ & $t_{\text{lb}}$(s) & $t_{\text{ub}}$(s) & $\#$Iter. & $\Delta$(\%) \\ \hline 
10LL &10&0.75&455.9&20.9&0.1&14.3&0.0\\
10LL &10&1.5&124.0&9.9&0.1&9.8&0.0\\
\hline
20LL&20&0.75&59150.5&3471.2&0.6&15.5&0.0\\
20LL&20&1.5&8445.2&880.2&0.8&8.2&0.1\\
\end{tabular}
}
\caption{Results of the CCG for instances with large deviations and $\Gamma=0.1n^2$.}
\label{tbl:AP_instances_bigDeviation_CCG}
\end{table}

The results in Table \ref{tbl:AP_instances_bigDeviation} show that the number of nodes in the branch \& bound tree and the number of iterations of Algorithm \ref{alg:columngenerationlinear} are larger than for the realistic instances above but still never get larger than $33$ and $12$ respectively. Both values are larger for the CAB instances. The number of nodes decreases with increasing dimension and with increasing $\alpha$. The same holds for the root gap which is lower than for the realistic instances for most of the instances. Clearly the adaptivity gap is much larger than for the smaller uncertainty sets. Similar to the results above the number of iterations for the calculations of the lower and the upper bounds and therefore the number of total oracle calls seem to be independent of the dimension. The same holds for the number of calculated second-stage solutions. The performance of these solutions over random scenarios is worse than for the realistic instances above, but still very small and never larger than $0.3\%$. For the CAB instances it is larger for $\alpha=1$. For the CCG algorithm the results are not very convincing. Even for instances with $20$ locations finding an optimal solution took more than $16$ hours in average for $\alpha=0.75$. Interestingly here the instances with smaller $\alpha$ were harder to solve.

\begin{figure}[htb]
\centering
\includegraphics[scale=0.2]{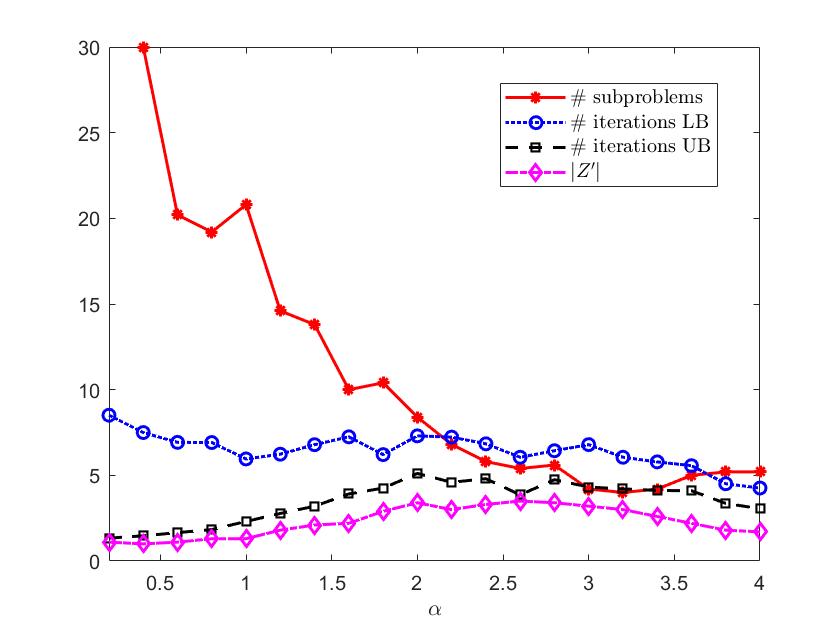}
\includegraphics[scale=0.2]{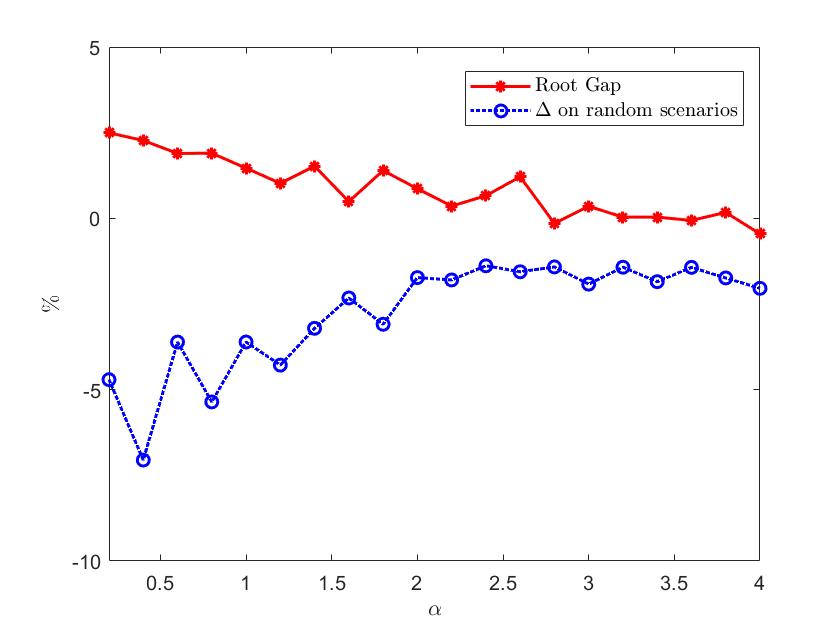}
\caption{Development of the parameters of the branch \& bound procedure over $\alpha$ for the $20LL$ instance with random deviations $\hat w_{ij}\in [0,10 \bar w_{ij}]$, $\chi=3$ and $\delta = 2$. The graphs in the right plot are presented in logarithmic scale.}
\label{fig:VaryAlpha}
\end{figure} 

\begin{figure}[htb]
\centering
\fbox{
\begin{minipage}[b]{.4\linewidth}
\begin{tikzpicture}[scale=1]
\node (0) at (1.29443,1.95227) [draw,circle,fill,inner sep=1pt] {};
\node (1) at (2.34878,1.47492) [draw,circle,fill,inner sep=1pt] {};
\node (2) at (3.10043,2.14583) [draw,circle,fill,inner sep=1pt] {};
\node (3) at (4.97285,1.79463) [draw,circle,fill,inner sep=1pt] {};
\node (4) at (1.82624,3.4231) [draw,circle,fill,inner sep=1pt] {};
\node (5) at (2.72004,2.96833) [draw,circle,fill,inner sep=1pt] {};
\node (6) at (3.26599,3.13228) [draw,rectangle,fill,red,inner sep=2pt] {};
\node (7) at (4.22351,3.28823) [draw,circle,fill,inner sep=1pt] {};
\node (8) at (1.92375,3.95078) [draw,circle,fill,inner sep=1pt] {};
\node (9) at (3.02406,3.81061) [draw,circle,fill,inner sep=1pt] {};
\node (10) at (3.56238,3.88141) [draw,circle,fill,inner sep=1pt] {};
\node (11) at (4.4608,3.83888) [draw,circle,fill,inner sep=1pt] {};
\node (12) at (2.17471,4.47848) [draw,circle,fill,inner sep=1pt] {};
\node (13) at (2.88755,4.74228) [draw,rectangle,fill,red,inner sep=2pt] {};
\node (14) at (3.23611,4.59633) [draw,circle,fill,inner sep=1pt] {};
\node (15) at (4.27557,4.29661) [draw,circle,fill,inner sep=1pt] {};
\node (16) at (1.28903,5.24757) [draw,circle,fill,inner sep=1pt] {};
\node (17) at (2.5061,5.15261) [draw,circle,fill,inner sep=1pt] {};
\node (18) at (3.15925,4.9746) [draw,circle,fill,inner sep=1pt] {};
\node (19) at (3.60101,4.95676) [draw,circle,fill,inner sep=1pt] {};
\draw [-, thick] (0) to  (6);
\draw [-, thick] (1) to  (6);
\draw [-, thick] (2) to  (6);
\draw [-, thick] (3) to  (6);
\draw [-, thick] (4) to  (6);
\draw [-, thick] (5) to  (6);
\draw [-, thick] (7) to  (6);
\draw [-, thick] (8) to  (13);
\draw [-, thick] (9) to  (6);
\draw [-, thick] (10) to  (6);
\draw [-, thick] (11) to  (6);
\draw [-, thick] (12) to  (13);
\draw [-, thick] (14) to  (13);
\draw [-, thick] (15) to  (13);
\draw [-, thick] (16) to  (13);
\draw [-, thick] (17) to  (13);
\draw [-, thick] (18) to  (13);
\draw [-, thick] (19) to  (13);
\end{tikzpicture}
\centering
\caption{Deterministic Solution}
\end{minipage}}
\qquad
\fbox{
\begin{minipage}[b]{.4\linewidth}
\begin{tikzpicture}[scale=1]
\node (0) at (1.29443,1.95227) [draw,circle,fill,inner sep=1pt] {};
\node (1) at (2.34878,1.47492) [draw,rectangle,fill,red,inner sep=2pt] {};
\node (2) at (3.10043,2.14583) [draw,circle,fill,inner sep=1pt] {};
\node (3) at (4.97285,1.79463) [draw,circle,fill,inner sep=1pt] {};
\node (4) at (1.82624,3.4231) [draw,circle,fill,inner sep=1pt] {};
\node (5) at (2.72004,2.96833) [draw,rectangle,fill,red,inner sep=2pt] {};
\node (6) at (3.26599,3.13228) [draw,circle,fill,inner sep=1pt] {};
\node (7) at (4.22351,3.28823) [draw,circle,fill,inner sep=1pt] {};
\node (8) at (1.92375,3.95078) [draw,circle,fill,inner sep=1pt] {};
\node (9) at (3.02406,3.81061) [draw,circle,fill,inner sep=1pt] {};
\node (10) at (3.56238,3.88141) [draw,rectangle,fill,red,inner sep=2pt] {};
\node (11) at (4.4608,3.83888) [draw,circle,fill,inner sep=1pt] {};
\node (12) at (2.17471,4.47848) [draw,circle,fill,inner sep=1pt] {};
\node (13) at (2.88755,4.74228) [draw,rectangle,fill,red,inner sep=2pt] {};
\node (14) at (3.23611,4.59633) [draw,circle,fill,inner sep=1pt] {};
\node (15) at (4.27557,4.29661) [draw,circle,fill,inner sep=1pt] {};
\node (16) at (1.28903,5.24757) [draw,circle,fill,inner sep=1pt] {};
\node (17) at (2.5061,5.15261) [draw,circle,fill,inner sep=1pt] {};
\node (18) at (3.15925,4.9746) [draw,circle,fill,inner sep=1pt] {};
\node (19) at (3.60101,4.95676) [draw,circle,fill,inner sep=1pt] {};
\draw [-, thick, black!100] (0) to  (1);
\draw [-, thick, black!100] (2) to  (5);
\draw [-, thick, black!100] (3) to  (10);
\draw [-, thick, black!100] (4) to  (13);
\draw [-, thick, black!100] (6) to  (10);
\draw [-, thick, black!100] (7) to  (10);
\draw [-, thick, black!100] (8) to  (13);
\draw [-, thick, black!100] (9) to  (10);
\draw [-, thick, black!100] (11) to  (10);
\draw [-, thick, black!100] (12) to  (13);
\draw [-, thick, black!100] (14) to  (13);
\draw [-, thick, black!100] (15) to  (10);
\draw [-, thick, black!100] (16) to  (13);
\draw [-, thick, black!100] (17) to  (13);
\draw [-, thick, black!100] (18) to  (13);
\draw [-, thick, black!100] (19) to  (13);
\end{tikzpicture}
\centering
\caption{Solution $1$}
\end{minipage}
}
\\
\fbox{
\begin{minipage}[b]{.4\linewidth}
\begin{tikzpicture}[scale=1]
\node (0) at (1.29443,1.95227) [draw,circle,fill,inner sep=1pt] {};
\node (1) at (2.34878,1.47492) [draw,rectangle,fill,red,inner sep=2pt] {};
\node (2) at (3.10043,2.14583) [draw,circle,fill,inner sep=1pt] {};
\node (3) at (4.97285,1.79463) [draw,circle,fill,inner sep=1pt] {};
\node (4) at (1.82624,3.4231) [draw,circle,fill,inner sep=1pt] {};
\node (5) at (2.72004,2.96833) [draw,rectangle,fill,red,inner sep=2pt] {};
\node (6) at (3.26599,3.13228) [draw,circle,fill,inner sep=1pt] {};
\node (7) at (4.22351,3.28823) [draw,circle,fill,inner sep=1pt] {};
\node (8) at (1.92375,3.95078) [draw,circle,fill,inner sep=1pt] {};
\node (9) at (3.02406,3.81061) [draw,circle,fill,inner sep=1pt] {};
\node (10) at (3.56238,3.88141) [draw,rectangle,fill,red,inner sep=2pt] {};
\node (11) at (4.4608,3.83888) [draw,circle,fill,inner sep=1pt] {};
\node (12) at (2.17471,4.47848) [draw,circle,fill,inner sep=1pt] {};
\node (13) at (2.88755,4.74228) [draw,rectangle,fill,red,inner sep=2pt] {};
\node (14) at (3.23611,4.59633) [draw,circle,fill,inner sep=1pt] {};
\node (15) at (4.27557,4.29661) [draw,circle,fill,inner sep=1pt] {};
\node (16) at (1.28903,5.24757) [draw,circle,fill,inner sep=1pt] {};
\node (17) at (2.5061,5.15261) [draw,circle,fill,inner sep=1pt] {};
\node (18) at (3.15925,4.9746) [draw,circle,fill,inner sep=1pt] {};
\node (19) at (3.60101,4.95676) [draw,circle,fill,inner sep=1pt] {};
\draw [-, thick, black!100] (0) to  (1);
\draw [-, thick, black!100] (2) to  (5);
\draw [-, thick, black!100] (3) to  (10);
\draw [-, thick, black!100] (4) to  (5);
\draw [-, thick, black!100] (6) to  (10);
\draw [-, thick, black!100] (7) to  (10);
\draw [-, thick, black!100] (8) to  (13);
\draw [-, thick, black!100] (9) to  (10);
\draw [-, thick, black!100] (11) to  (10);
\draw [-, thick, black!100] (12) to  (13);
\draw [-, thick, black!100] (14) to  (13);
\draw [-, thick, black!100] (15) to  (10);
\draw [-, thick, black!100] (16) to  (13);
\draw [-, thick, black!100] (17) to  (13);
\draw [-, thick, black!100] (18) to  (13);
\draw [-, thick, black!100] (19) to  (13);     
\end{tikzpicture}
\centering
\caption{Solution 2}
\end{minipage}
}
\qquad
\fbox{
\begin{minipage}[b]{.4\linewidth}
\begin{tikzpicture}[scale=1]
\node (0) at (1.29443,1.95227) [draw,circle,fill,inner sep=1pt] {};
\node (1) at (2.34878,1.47492) [draw,rectangle,fill,red,inner sep=2pt] {};
\node (2) at (3.10043,2.14583) [draw,circle,fill,inner sep=1pt] {};
\node (3) at (4.97285,1.79463) [draw,circle,fill,inner sep=1pt] {};
\node (4) at (1.82624,3.4231) [draw,circle,fill,inner sep=1pt] {};
\node (5) at (2.72004,2.96833) [draw,rectangle,fill,red,inner sep=2pt] {};
\node (6) at (3.26599,3.13228) [draw,circle,fill,inner sep=1pt] {};
\node (7) at (4.22351,3.28823) [draw,circle,fill,inner sep=1pt] {};
\node (8) at (1.92375,3.95078) [draw,circle,fill,inner sep=1pt] {};
\node (9) at (3.02406,3.81061) [draw,circle,fill,inner sep=1pt] {};
\node (10) at (3.56238,3.88141) [draw,rectangle,fill,red,inner sep=2pt] {};
\node (11) at (4.4608,3.83888) [draw,circle,fill,inner sep=1pt] {};
\node (12) at (2.17471,4.47848) [draw,circle,fill,inner sep=1pt] {};
\node (13) at (2.88755,4.74228) [draw,rectangle,fill,red,inner sep=2pt] {};
\node (14) at (3.23611,4.59633) [draw,circle,fill,inner sep=1pt] {};
\node (15) at (4.27557,4.29661) [draw,circle,fill,inner sep=1pt] {};
\node (16) at (1.28903,5.24757) [draw,circle,fill,inner sep=1pt] {};
\node (17) at (2.5061,5.15261) [draw,circle,fill,inner sep=1pt] {};
\node (18) at (3.15925,4.9746) [draw,circle,fill,inner sep=1pt] {};
\node (19) at (3.60101,4.95676) [draw,circle,fill,inner sep=1pt] {};
\draw [-, thick, black!100] (0) to  (5);
\draw [-, thick, black!100] (2) to  (5);
\draw [-, thick, black!100] (3) to  (10);
\draw [-, thick, black!100] (4) to  (5);
\draw [-, thick, black!100] (6) to  (10);
\draw [-, thick, black!100] (7) to  (10);
\draw [-, thick, black!100] (8) to  (13);
\draw [-, thick, black!100] (9) to  (10);
\draw [-, thick, black!100] (11) to  (10);
\draw [-, thick, black!100] (12) to  (13);
\draw [-, thick, black!100] (14) to  (13);
\draw [-, thick, black!100] (15) to  (10);
\draw [-, thick, black!100] (16) to  (13);
\draw [-, thick, black!100] (17) to  (13);
\draw [-, thick, black!100] (18) to  (13);
\draw [-, thick, black!100] (19) to  (13);
\end{tikzpicture}
\centering
\caption{Solution 3}
\end{minipage}
}
\caption{The optimal solution of the nominal scenario $\bar w$ (top left) and the optimal two-stage robust solution presented by all $3$ solutions in $Z'$ returned by Algorithm \ref{alg:columngenerationlinear} in the optimal branch \& bound node for a $20LL$ instance with $\alpha=1.5$ and $\Gamma=40$.}
\label{fig:allSolutions}
\end{figure}

In Figure \ref{fig:VaryAlpha} we present the development of several problem parameters over $\alpha$ for the $20LL$ instance. All values are the average over $10$ random uncertainty sets with random deviations $\hat w_{ij}\in [0,10 \bar w_{ij}]$. Cost parameters are defined as above by $\chi=3$ and $\delta = 2$. Figure \ref{fig:VaryAlpha} shows that the number of nodes in the branch \& bound tree rapidly decreases with increasing $\alpha$. Furthermore the number of iterations performed by Algorithm \ref{alg:columngenerationlinear} to calculate the upper bounds and the number of returned policies in $Z'$ increases until $\alpha = 2$ and afterwards slowly decreases. 
The number of iterations performed by Algorithm \ref{alg:columngenerationlinear} to calculate the lower bounds is nearly constant and slightly decreases. The root gap of the branch \& bound procedure decreases with increasing $\alpha$ and tends to $0$. In contrast to this the performance of the returned policies in $Z'$, indicated by $\Delta$, seems to get worse with increasing $\alpha$, and seems to be constant for $\alpha \ge 2$. Nevertheless all $\Delta$ values are very small and remain close to $0.2\%$ for $\alpha \ge 2$.

In summary the results show that the number of nodes of the branch \& bound procedure and the number of iterations of Algorithm \ref{alg:columngenerationlinear} is very low for the realistic instances of the SAHLP. Hence we could solve instances with up to $90$ locations in less than $4$ hours. Furthermore the number of calculated policies $|Z'|$ is very low for the hub location problem but they perform very well on random scenarios. For the larger uncertainty sets, the number of nodes of the branch \& bound procedure and the number of iterations of Algorithm \ref{alg:columngenerationlinear} is larger but is still very low compared to the dimension of the problem. Furthermore the latter values seem to be nearly constant with increasing dimension. The runtime and the number of iterations of Algorithm \ref{alg:columngenerationlinear} increase with increasing $\alpha$ while the number of nodes of the branch \& bound tree decreases.

An example of an optimal solution of a random instance with $20$ locations and $\hat w_{ij}$ randomly drawn in $[0,10 \bar w_{ij}]$ is shown in Figure \ref{fig:allSolutions}. The figure shows the optimal solution of the nominal scenario $\bar w$ and the three returned solutions in $Z'$. The number of hubs is larger in the two-stage robust solution than in the deterministic solution since for flexible re-allocation after a scenario occured it can be beneficial to build further hubs in advance. Furthermore the figure indicates that a hub which is used by many locations in the deterministic solution may not be used by the second-stage reactions of the two-stage solution.

\subsection{The Capital Budgeting Problem}\label{sec:CBComputations}
In this section the oracle-based branch \& bound algorithm and the CCG algorithm are exemplarily applied to the two-stage robust capital budgeting problem studied in \cite{arslan2019decomposition} which can be naturally defined as a two-stage problem.

The capital budgeting problem (CB) is an investment planning problem, where a subset of $n$ projects has to be selected. Each project $i\in [n]$ has costs $c_i$ and an uncertain profit $\tilde p_i$ which depends on a set of $m$ risk factors $\xi\in U\subset \R^m$. The profits are given by $\tilde p_i(\xi ) = (1+\frac{1}{2}Q_i^\top\xi)\bar p_i$, where $\bar p_i$ are the nominal profits and $Q_i$ is the $i$-th row of the factor loading matrix. For each project the company can decide if it wants to invest in the project here-and-now or if it wants to wait until the risk factors are known. If an investment is postponed to the second stage the profit generated by the project is $f\tilde p_i$ where $0\le f<1$. The costs of a project are the same in the first and the second stage. The company has a given budget $B$ for investing in projects and can additionally take out a loan of volume $C_1$ with costs $\lambda$ in the first stage and a loan of volume $C_2$ with costs $\lambda\mu$ in the second stage where $\mu > 1$. The aim is to maximize the worst-case profit. This problem can be formulated as
\begin{equation}\label{eq:capitalbudgetingtwostage}
\begin{aligned}
\max_{(x,x_0)\in X} -\lambda x_0 + \bar p^\top(x+fy) + \min_{\xi\in U}\max_{(y,y_0)\in Y((x,x_0))}\sum_{i=1}^{n}\frac{1}{2}Q_i^\top\xi\bar p_i (x_i+fy_i) -\lambda\mu y_0
\end{aligned}
\end{equation}
where $X=\left\{ (x,x_0)\in\binvar{n+1} \ | \ c^\top x \le B + C_1x_0\right\}$ is the set of feasible first-stage solutions and 
\[Y((x,x_0))=\left\{ (y,y_0)\in \binvar{n+1} \ | \ c^\top (x+y) \le B + C_1x_0 + C_2 y_0, \ x+y\le \boldsymbol{1}\right\}\]
is the set of feasible second-stage solutions. For more details see \cite{arslan2019decomposition}.

\subsubsection{Computational Results}\label{sec:computations}
In this section we apply the branch \& bound method derived in Section \ref{sec:lineartwostage} and the CCG method presented in Section \ref{sec:CCG_linear} to the capital budgeting problem. The implementation of both algorithms is the same as in Section \ref{sec:hub}. As deterministic oracle in Step \ref{step:if} of Algorithm \ref{alg:columngenerationlinear} we implemented the deterministic version of the integer programming formulation of Problem \eqref{eq:capitalbudgetingtwostage} in CPLEX 12.8. Note that since we consider a maximization problem here the terms upper bound and lower bound are swapped. 

We compare both variants of calculating a dual solution $\bar x$ presented in \eqref{eq:dualsolutionavg} and \eqref{eq:dualsolutionopt} which we denote by \textit{DualSol-Avg} and \textit{DualSol-Opt}, respectively. The branching is performed on the variable which is closest to $0.5$. A feasible first-stage solution is obtained by rounding the entries of $\bar x$ to the closest integer value. If this solution is not feasible we choose the first solution which was returned by Algorithm \ref{alg:columngenerationlinear} after the calculation of the upper bound.

For our tests we use the original instances studied in \cite{arslan2019decomposition}. The authors generate random instances with $n\in\left\{ 10,20,30,40,50,100\right\}$ projects and $m\in\left\{ 4,6,8\right\}$ risk factors. For each combination $20$ instances are generated. The uncertainty set is given by the box $U=[-1,1]^m$. For more details see \cite{arslan2019decomposition}.

\paragraph{Analysis of Results}
\begin{table}[htb]
\centering
\resizebox{0.9\textwidth}{!}
{
\begin{tabular}{cc|rrrr|rrrr}
& & \multicolumn{4}{|c|}{DualSol-Avg} & \multicolumn{4}{c}{DualSol-Opt}\\
\hline
$n$ & $m$ & $\#$Nodes &  $\#$Oracle & $t$(s) & Opt(\%) & $\#$Nodes &  $\#$Oracle & $t$(s) & Opt(\%) \\ \hline 
10&4&36.6&175.4&1.8&\bf{100}&\bf{31.0}&\bf{131.2}&\bf{1.5}&\bf{100}\\
10&6&28.5&135.8&1.4&\bf{100}&\bf{20.5}&\bf{85.5}&\bf{1.0}&\bf{100}\\
10&8&27.1&140.2&1.6&\bf{100}&\bf{21.3}&\bf{105.3}&\bf{1.3}&\bf{100}\\
\hline
20&4&341.7&3584.3&61.7&\bf{100}&\bf{259.1}&\bf{2591.7}&\bf{50.3}&\bf{100}\\
20&6&130.8&1609.9&30.0&\bf{100}&\bf{110.2}&\bf{1185.0}&\bf{25.6}&\bf{100}\\
20&8&233.3&3000.0&58.0&\bf{100}&\bf{185.3}&\bf{2177.3}&\bf{48.0}&\bf{100}\\
\hline
30&4&1567.1&24812.2&968.5&\bf{95}&\bf{866.9}&\bf{12561.9}&\bf{589.7}&\bf{95}\\
30&6&4883.5&98070.3&2224.7&90&\bf{2613.3}&\bf{49107.6}&\bf{1245.6}&\bf{100}\\
30&8&7551.5&165807.6&3764.3&\bf{90}&\bf{5587.5}&\bf{118041.0}&\bf{2871.5}&\bf{90}\\
\hline
40&4&172.9&2454.8&101.4&\bf{100}&\bf{79.1}&\bf{991.7}&\bf{47.8}&\bf{100}\\
40&6&1099.6&24308.3&867.8&\bf{100}&\bf{415.1}&\bf{9053.3}&\bf{390.5}&\bf{100}\\
40&8&82702.4&2521663.0&78876.8&\bf{70}&\bf{56771.4}&\bf{1696806.5}&\bf{61148.2}&\bf{70}\\
\hline
50&4&229.4&2849.4&99.2&\bf{100}&\bf{100.3}&\bf{1169.7}&\bf{48.7}&\bf{100}\\
50&6&268.8&5357.1&227.6&\bf{100}&\bf{118.0}&\bf{2160.7}&\bf{137.0}&\bf{100}\\
50&8&2237.4&65036.7&3530.1&80&\bf{439.9}&\bf{11883.3}&\bf{717.9}&\bf{100}\\
\hline
100&4&1191.1&12911.5&536.4&\bf{100}&\bf{340.7}&\bf{3410.8}&\bf{147.4}&\bf{100}\\
100&6&4546.1&73554.6&1977.5&90&\bf{543.3}&\bf{9458.4}&\bf{288.6}&\bf{100}\\
100&8&4289.7&93770.2&2899.9&90&\bf{603.3}&\bf{14755.7}&\bf{581.7}&\bf{100}
\end{tabular}
 }
\caption{Results of the branch \& bound procedure for both variants.}
\label{tbl:CB_BnB_Strategies1}
\end{table}

\begin{table}[htb]
\centering
\resizebox{0.95\textwidth}{!}
{
\begin{tabular}{cc|r|rrrrr|rrrrr}
& & &\multicolumn{5}{|c|}{DualSol-Avg} & \multicolumn{5}{c}{DualSol-Opt}\\
\hline
$n$& $m$& $\Delta_{\text{ad}}$(\%) & $\Delta_{\text{root}}$(\%)& $i_{\text{ub}}$ & $i_{\text{lb}}$ & $\#$Sol. & $\Delta$(\%) & $\Delta_{\text{root}}$(\%)& $i_{\text{ub}}$ & $i_{\text{lb}}$ & $\#$Sol. & $\Delta$(\%) \\
\hline
10&4&13.1&6.8&\bf{2.0}&2.3&\bf{1.8}&\bf{0.3}&\bf{5.5}&\bf{2.0}&\bf{1.9}&\bf{1.8}&0.5\\
10&6&16.7&8.0&\bf{1.7}&2.2&\bf{1.7}&0.4&\bf{7.0}&1.8&\bf{1.8}&\bf{1.7}&\bf{0.3}\\
10&8&18.0&\bf{7.1}&\bf{2.4}&2.2&\bf{1.6}&\bf{0.2}&7.2&2.5&\bf{1.9}&\bf{1.6}&0.3\\
\hline
20&4&6.2&5.3&\bf{4.4}&3.7&\bf{2.5}&\bf{0.6}&\bf{4.8}&4.7&\bf{3.1}&\bf{2.5}&\bf{0.6}\\
20&6&6.5&4.4&\bf{4.7}&3.7&\bf{2.5}&\bf{0.3}&\bf{4.2}&4.8&\bf{2.9}&\bf{2.5}&0.4\\
20&8&7.5&5.2&\bf{4.9}&4.4&\bf{2.5}&\bf{0.3}&\bf{3.8}&5.2&\bf{3.5}&\bf{2.5}&\bf{0.3}\\
\hline
30&4&3.2&5.0&\bf{6.0}&5.0&4.0&\bf{0.6}&\bf{3.7}&6.1&\bf{4.5}&\bf{3.8}&\bf{0.6}\\
30&6&3.5&5.1&\bf{7.2}&6.0&3.8&\bf{0.4}&\bf{3.9}&7.5&\bf{5.2}&\bf{3.7}&\bf{0.4}\\
30&8&4.3&4.9&\bf{8.3}&6.9&\bf{3.8}&0.4&\bf{4.0}&8.8&\bf{5.7}&\bf{3.8}&\bf{0.3}\\
\hline
40&4&1.9&5.4&5.8&5.7&\bf{4.1}&0.8&\bf{4.4}&\bf{5.4}&\bf{5.3}&\bf{4.1}&\bf{0.6}\\
40&6&2.3&5.5&8.0&7.3&\bf{5.2}&0.5&\bf{4.0}&\bf{7.5}&\bf{6.4}&\bf{5.2}&\bf{0.3}\\
40&8&3.3&5.0&\bf{11.1}&8.8&\bf{6.2}&\bf{0.4}&\bf{3.3}&11.5&\bf{7.9}&7.0&\bf{0.4}\\
\hline
50&4&2.3&5.1&5.1&5.7&\bf{4.7}&\bf{0.7}&\bf{3.2}&\bf{5.0}&\bf{5.5}&4.8&0.9\\
50&6&2.5&5.8&7.9&7.9&\bf{6.5}&\bf{0.5}&\bf{3.7}&\bf{7.1}&\bf{7.3}&\bf{6.5}&0.6\\
50&8&2.6&5.5&10.1&9.9&\bf{7.8}&\bf{0.4}&\bf{3.5}&\bf{9.1}&\bf{9.2}&8.0&\bf{0.4}\\
\hline
100&4&1.2&5.6&\bf{3.6}&\bf{5.4}&\bf{5.5}&\bf{0.9}&\bf{1.6}&\bf{3.6}&5.8&6.1&\bf{0.9}\\
100&6&1.1&5.8&5.4&\bf{8.1}&7.4&0.8&\bf{3.1}&\bf{5.2}&8.3&\bf{7.1}&\bf{0.7}\\
100&8&1.5&5.0&8.2&\bf{12.2}&10.9&0.8&\bf{3.0}&\bf{7.9}&12.7&\bf{9.4}&\bf{0.7}
\end{tabular}
 }
\caption{Results of the branch \& bound procedure for both variants.}
\label{tbl:CB_BnB_Strategies2}
\end{table}
\begin{table}[htb]
\centering
\resizebox{0.7\textwidth}{!}
{
\begin{tabular}{cc|rrrrrrr}
$n$ & $m$ & Opt(\%) & Gap(\%) & $t$(s) & $t_{\text{ub}}$(s) & $t_{\text{lb}}$(s) & $\#$Iter. & $\Delta$(\%) \\ \hline 
10&4&100&0.0&0.5&0.0&0.0&7.8&0.2\\
10&6&100&0.0&0.4&0.0&0.0&7.4&0.4\\
10&8&100&0.0&0.7&0.0&0.0&8.4&0.3\\
\hline
20&4&90&0.0&733.2&24.7&0.0&12.7&0.6\\
20&6&100&0.0&49.6&1.9&0.0&12.9&0.4\\
20&8&70&0.1&2171.2&69.1&0.0&18.5&0.3\\
\hline
30&4&90&0.0&832.2&42.4&0.0&14.5&0.6\\
30&6&60&0.1&2900.3&105.1&0.0&18.4&0.5\\
30&8&55&0.3&3695.3&137.3&0.0&18.1&0.4\\
\hline
40&4&100&0.0&105.6&6.7&0.0&10.3&0.7\\
40&6&85&0.0&1280.8&59.4&0.0&17.6&0.5\\
40&8&50&0.2&3956.5&131.1&0.1&30.0&0.6\\
\hline
50&4&95&0.3&445.5&52.8&0.0&9.8&1.1\\
50&6&90&0.0&930.1&44.1&0.1&13.5&0.6\\
50&8&75&0.6&2246.1&127.6&0.1&18.7&0.5\\
\hline
100&4&80&1.6&1508.1&239.5&0.1&6.6&1.2\\
100&6&55&3.6&3490.7&424.1&0.1&7.8&1.3\\
100&8&40&3.2&4348.8&452.7&0.1&10.2&1.3\\
\end{tabular}
 }
\caption{Results of the CCG algorithm.}
\label{tbl:CB_CCG}
\end{table}

The results for the branch \& bound procedure are presented in Table \ref{tbl:CB_BnB_Strategies1} and \ref{tbl:CB_BnB_Strategies2}. Each row in Table \ref{tbl:CB_BnB_Strategies1} shows the average over all $20$ instances of the following values from left to right: The number of projects $n$; the number of risk factors $m$; the total number of nodes solved in the branch \& bound tree; the total number of oracle calls; the total solution time $t$ in seconds; the percentage of instances which could be solved to optimality during the timelimit of $7200$ seconds.

Each row in Table \ref{tbl:CB_BnB_Strategies2} shows the average over all $20$ instances of the following values from left to right: The number of projects $n$; the number of risk factors $m$; the adaptivity gap $\Delta_{\text{ad}}$ in \%; the root-gap $\Delta_{\text{root}}$ in \%; the average number of iterations $i_{\text{ub}}$ of Algorithm \ref{alg:columngenerationlinear} to calculate the upper bounds; the average number of iterations $i_{\text{lb}}$ of Algorithm \ref{alg:columngenerationlinear} to calculate the lower bounds; the number of solutions $|Z'|$ Algorithm \ref{alg:columngenerationlinear} returned for the optimal first-stage solution $x$; the average percental difference $\Delta$ (over $10$ random scenarios in $U$) between the best solution in $Z'$ and the deterministic optimal solution in each scenario; see Section \ref{sec:hub} for a precise definition. All values are presented for both variants, \textit{DualSol-Avg} and \textit{DualSol-Opt}. The bold-faced values indicate which of the two variants is better.

The results in Table \eqref{tbl:CB_BnB_Strategies1} indicate that the \textit{DualSol-Opt} variant performs much better on most of the instances. The larger computational effort which is made to calculate the optimal dual solution does not have an impact on the total run-time since the number of processed nodes in the branch \& bound tree is much smaller. For both variants the number of nodes processed in the branch \& bound tree and the number of oracle calls is significantly larger than for the SAHLP; compare to Section \ref{sec:hub}. Both values and therefore the run-time increase with increasing $m$. Interestingly the instances with dimension $n=30$ and $n=40$ seem to be the hardest to solve. The total run-time for the instances with $m=8$ is very large. Nevertheless for most of the configurations all instances could be solved during the timelimit.

In contrast to the latter results, the values in Table \ref{tbl:CB_BnB_Strategies2} are not much larger than for the SAHLP. The root-gap is better for the \textit{DualSol-Opt} variant for most of the instances but is very small for both methods and at most $8\%$. The number of iterations performed to calculate the upper and the lower bounds and the number of calculated solutions are slightly larger than for the SAHLP but still very small. All values seem to be independent of the size of the dimension and the number of risk factors. The gap $\Delta$ is slightly larger than for the SAHLP but still at most $1\%$.

The results for the CCG are presented in Table \ref{tbl:CB_CCG}. Each row shows the average over all $20$ instances of the following values from left to right: The number of projects $n$; the number of risk factors $m$; the percentage of instances which could be solved to optimality during the timelimit of $7200$ seconds, the optimality gap of the CCG after $7200$ seconds; the total solution time $t$ in seconds (exceeding the timelimit is counted as $7200$ seconds); the average solution time $t_{\text{ub}}$ to calculate the upper bounds; the average solution time $t_{\text{lb}}$ to calculate the lower bounds; the total number of iterations; the average percental difference $\Delta$ (over $10$ random scenarios in $U$) between the best solution in $Z'$ and the deterministic optimal solution in each scenario. Here $Z'$ is the set of solutions calculated by Algorithm \ref{alg:columngenerationlinear} in the last iteration. Note that we stopped the calculations for each instance after $7200$ seconds, since for several instances the memory used by CPLEX was too large. Therefore the run-times can not be compared to the run-times of the branch \& bound method.

As for the SAHLP the results of the CCG algorithm are less convincing. The number of instances solved to optimality during the timelimit is much smaller than for the branch \& bound method, sometimes smaller than $55\%$. Nevertheless the optimality gap after the timelimit is very small, at most $3.6\%$. The number of iterations is small for most of the instances and seems to be independent of the size of the dimension. It increases with increasing $m$. As for the SAHLP most of the run-time is used to calculate the upper bound problem. The gap $\Delta$ is smaller than $1\%$ for most of the instances, as it is the case for the branch \& bound method.

To summarize, for the two-stage robust capital budgeting problem the number of nodes processed in the branch \& bound tree and the number of oracle calls is significantly larger than for the SAHLP. Nevertheless since the deterministic problem can be solved much faster the total run-time is not larger for the instances with small $m$. Although most of the instances could be solved during the timelimit by the branch \& bound method, the run-time for instances with $m=8$ can be very large. But still the branch \& bound method solves significantly more instances to optimality than the CCG. Nevertheless the optimality gap of the CCG after the timelimit is very small.

\section{Conclusion.}\label{sec:conclusion}

In this paper we derive a branch \& bound procedure to solve robust binary two-stage problems for a wide class of objective functions. We show that the oracle-based column generation algorithm presented in \cite{buchheimkurtzconvex} can be adapted to calculate lower bounds which can be used in a classical branch \& bound procedure. The whole procedure can be implemented for any algorithm solving the underlying deterministic problem. Furthermore we apply the column-and-constraint generation algorithm studied in \cite{zeng2013twostage} to our problem and show that again the oracle-based algorithm in \cite{buchheimkurtzconvex} can be used to solve one step of the procedure.  We test both algorithms on classical benchmark instances of the single-allocation hub location problem and on random instances of the capital budgeting problem. We show that the number of nodes in the branch \& bound tree, the number of iterations of the CCG algorithm as well as the number of iterations of the column generation algorithm is very low for the SAHLP while the number of branch \& bound nodes increases significantly for the capital budgeting problem. Nevertheless our branch \& bound procedure is much faster than the CCG algorithm and can solve larger instances in reasonable time. Furthermore our computational results indicate that for both algorithms the precalculated second-stage solutions perform very well on random scenarios.

%

\paragraph*{Acknowledgements}
We would like to thank the referees for their valuable comments which significantly improved the paper. Furthermore we thank Ayse Nur Arslan and Boris Detienne for providing us their instances on the capital budgeting problem.

%
%

\bibliographystyle{abbrv}      
\bibliography{./my}   


\end{document}